\documentclass[a4paper, 12pt, twoside]{report}
\usepackage{etex}
\usepackage[utf8]{inputenc}
\usepackage[T1]{fontenc}
\usepackage{lmodern}
\usepackage{hyphsubst}
\usepackage[english]{babel}
\usepackage{textcomp}
\usepackage{enumitem}
\usepackage{microtype}

\usepackage[onehalfspacing]{setspace}

\usepackage{listings}
\lstset{language=[LaTeX]TeX}

\usepackage{amsmath}
\usepackage{amsfonts}
\usepackage{amssymb}
\usepackage{mathtools}
\usepackage{tikz}
\usetikzlibrary{arrows}
\usetikzlibrary{positioning}

\usepackage{graphicx}
\usepackage{caption}
\usepackage{subcaption}

\usepackage{chngcntr}
\counterwithout{equation}{chapter}

\usepackage{hyperref}
\hypersetup{
	hidelinks
}

\usepackage[figurename=Figure]{caption}
\DeclareCaptionFormat{empty}{#1} 

\usepackage{chngcntr}
\counterwithin{figure}{chapter}

\usepackage{algorithm}
\usepackage{algorithmicx}
\usepackage{algpseudocode}

\usepackage{multicol}

\newcommand{\R}{\mathbb{R}}

\newcommand{\N}{\mathbb{N}}

\newcommand{\norm}[1]{\left\lVert#1\right\rVert}
\newcommand{\abs}[1]{\left\lvert#1\right\rvert}
\DeclarePairedDelimiter{\nor}{\lVert}{\rVert}
\newcommand{\defeq}{\mathrel{\vcentcolon=}}
\newcommand{\defeqback}{\mathrel{=\vcentcolon}}


\newcommand{\mitem}{\item\(\displaystyle} 

\newcommand{\diff}{\mathrm{d}}

\providecommand\given{} 
\newcommand\SetSymbol[1][]{
   \nonscript\,#1\vert \allowbreak \nonscript\,\mathopen{}}
\DeclarePairedDelimiterX\Set[1]{\lbrace}{\rbrace}
 { \renewcommand\given{\SetSymbol[\delimsize]} #1 }

\newenvironment{definitionenum}{
\begin{enumerate}[label=\roman*)]}{
\end{enumerate}}

\newenvironment{theoremenum}{
\begin{enumerate}[label=\alph*)]}{
\end{enumerate}}

\usepackage{amsthm}
\theoremstyle{plain}
\newtheorem{theorem}{Theorem}[chapter]
\newtheorem{lemma}[theorem]{Lemma}

\theoremstyle{definition}
\newtheorem{definition}[theorem]{Definition}
\newtheorem{assumption}[theorem]{Assumption}
\theoremstyle{remark}
\newtheorem{remark}[theorem]{Remark}

\DeclareMathOperator{\Vor}{Vor}
\DeclareMathOperator{\ind}{index}
\DeclareMathOperator*{\argmin}{arg\,min}
\DeclareMathOperator{\supp}{supp}
\DeclareMathOperator{\Wass}{Wass}

\title{Bachelor Thesis}
\author{Valentin Hartmann}
\date{19.03.2016}

\begin{document}

\pagenumbering{gobble}

\begin{titlepage}
	\centering
	\includegraphics[width=\textwidth]{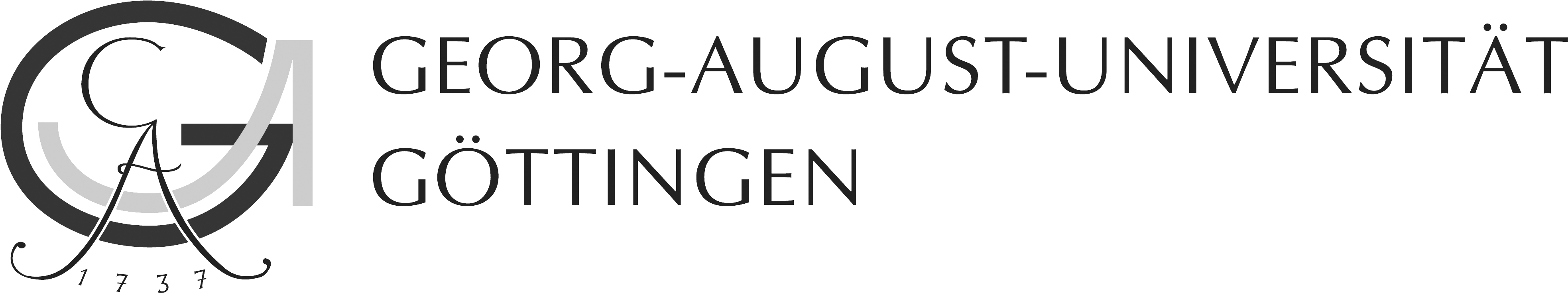}\par
	\vspace{0.5cm}
	{\scshape\large Faculty of Mathematics and Computer Science\par}
	\vspace{2cm}
	{\scshape\Large Bachelor Thesis\par}
	\vspace{2cm}
	{\bfseries\Large A Geometry-Based Approach for Solving the Transportation Problem with Euclidean Cost\par}
	\vspace{2.5cm}
	{\Large Valentin Hartmann\par}
	\vspace{2cm}
	
	\begin{multicols}{2}
	{\scshape\normalsize First Supervisor:\par}
	{\large Prof. Dr. Dominic Schuhmacher\par}
	\columnbreak
	{\scshape\normalsize Second Supervisor:\par}
	{\large Prof. Dr. Axel Munk\par}
	\end{multicols}
	
	\vspace{1cm}
	{\normalsize{\scshape Submitted:} August 19, 2016\par}
	
\end{titlepage}

\cleardoublepage

\tableofcontents

\cleardoublepage
\pagenumbering{arabic}

\chapter{Introduction}
Given a pile of sand lying on an area \(A\) and a target area \(B\), determine the cheapest way of moving the pile from A to B where each pair of a grain and a position is assigned a cost of moving said grain to said position. By raising this problem in 1781, Gaspard Monge founded the theory of optimal transport \cite{Mon81}. A measure theoretic formulation of a similar problem was later given by Kantorovich \cite{Kan42}: The source and the target pile are defined as mass distributions \(\mu\) and \(\nu\) and we are to find a distribution \(\gamma\) with marginals \(\mu\) and \(\nu\) that minimizes the cost of the overall transport,
\[\int c(x,y) \gamma(\diff x, \diff y),\]
with \(c(x,y)\) denoting the cost of the transport of one unit of mass at poisition \(x\) to position \(y\).\\
A variant that comes closer to Monge's description is the search for a map \(T\) with \(T_\# \mu = \nu\) --- that is, it transforms the mass distribution of \(\mu\) into the one of \(\nu\) --- and minimal cost
\[\int c(x,T(x)) \mu(\diff x).\]
The drawback of this version, as opposed to the one of Kantorovich, is that a map fulfilling \(T_\# \mu = \nu\) does not exist in the general case, let alone an optimal one.

The existence of an optimal map \(T\) depends on two factors: the choice of the measures \(\mu\) and \(\nu\) and the choice of the cost function \(c\). If for example \(\mu\) and \(\nu\) are discrete with finite support, the problem turns into a combinatorial optimization problem. In this thesis we are going to study the case where \(\mu\) is a continuous and \(\nu\) is a discrete probability measure. In this setting existence of a solution to the transportation problem can be shown for a large class of cost functions using mainly geometric arguments \cite{GKPR13}. The cost functions include arbitrary powers \(p\) of the Euclidean distance \(\norm{x-y}^p\). We will examine the case where \(p=1\).\\
One can visualize the problem as follows. Consider a large town with several hospitals spread across it, each providing a limited number of beds. In addition, the town is split into different areas for each of which the population density is known. The municipality wants to hand out a plan to its citizens telling them which hospital to visit in case of an emergency based on their place of residence. The assignment should be done in such a way that the bed occupation rate is the same for each hospital and that the average journey for the visitors is as short as possible.

For \(p=2\) an algorithm to explicitly compute the map \(T\) has been developed by Aurenhammer et al. \cite{AHA98} and recently been improved in speed by Mérigot \cite{Mer11} by using a decomposition of \(\nu\). We base our work on Mérigot's idea of turning the problem into a convex optimization problem but adjust the algorithm to work for \(p=1\) and give a novel proof of the soundness of this approach.

\chapter{Preliminaries}
Before we can begin looking at the problem of optimal transport, we have to introduce a few concepts and notations. Nevertheless, the reader should already be familiar with general measure theory since we will not start from ground up.

\section{Discrete and Continuous Measures}
In the following we will always equip \(\R^d\) with the Borel \(\sigma\)-algebra.

\begin{definition}
A \emph{measure} \(\gamma\) on \(\R^d\) is a map from the measurable subsets of \(\R^d\) to \([0,\infty]\) with the following properties:
\begin{definitionenum}
\mitem \gamma(\emptyset) = 0\)
\mitem \gamma(\bigcup_{i=1}^\infty A_i) = \sum_{i=1}^\infty\gamma(A_i)\quad\)for all countable families \((A_i)_i\) of pairwise disjoint measurable sets (\emph{\(\sigma\)-additivity})
\end{definitionenum}
We call \(\gamma\) a \emph{probability measure} if \(\gamma(\R^d)=1\).
\end{definition}

\begin{definition}
A measure \(\gamma\) is called
\begin{itemize}
	\item \emph{continuous} if it has a density, that is, a non-negative measurable function \(\rho\) such that \[\gamma(A) = \int_A\rho(x)\diff x\] for all measurable sets \(A\subset\R^d\);
	\item \emph{discrete} if there exist a countable set S of points in \(\R^d\) and positive numbers \((\lambda_p)_{p\in S}\) such that \[\gamma=\sum_{p\in S}\lambda_p\delta_p\] where \(\delta_p\) is the probability measure with \(\delta_p(\{p\})=1\).
\end{itemize}
\end{definition}

\begin{assumption}
\label{ass_measures}
In our case, both the source measure \(\mu\) and the target measure \(\nu\) are probability measures. \(\mu\) is continuous with density \(\rho\) which, according to the Radon-Nikodym theorem, implies absolute continuity with respect to the Lebesgue measure, that is, it vanishes at Lebesgue null set. In addition, we require \(\int_{\R^d} \norm{x} \mu(\diff x)<\infty\). We will see how this comes into play in the next chapter.\\
\(\nu\) is a discrete measure supported on the finite set \(S = \{s^1, \dots, s^n\}\) with masses \(\lambda_i\), \(v = \sum_i\lambda_i\delta_{s^i}\).
\end{assumption}

\begin{definition}
\label{def:pushforward}
Let \(T:E_1\to E_2\) be a measurable map between the measure space \((E_1, \mathcal{E}_1, \gamma_1)\) and the measurable space \((E_2, \mathcal{E}_2)\) where \(T^{-1}(A)\in\mathcal{E}_1\) for all \(A\in\mathcal{E}_2\). The measure \(T_\#\gamma_1\) on \((E_2, \mathcal{E}_2)\), defined by
\[T_\#\gamma_1(A) \defeq \gamma_1(T^{-1}(A)) \quad \text{for all } A \in \mathcal{E}_2,\]
is called \emph{pushforward} of \(\gamma_1\) with respect to \(T\).\\
\(T\) is said to be a \emph{transport map} from \(\gamma_1\) to \(T_\#\gamma_1\).
\end{definition}

\section{Optimal Transport}
\label{ssec:Voronoi}
This thesis aims to find a transport map from the source measure \(\mu\) to the target measure \(\nu\) that has minimal cost in the following sense:
\begin{definition}
The \emph{cost} of a transport map \(T\) from \(\mu\) to \(\nu\) is defined by
\[c(T) \defeq \int_{\R^d}\norm{x-T(x)}\mu(\diff x) = \sum_{p\in S}\int_{T^{-1}(p)}\norm{x-p}\mu(\diff x)\]
where \(\norm{\cdot}\) denotes the Euclidean norm.
\end{definition}

\begin{remark}
Note that \(c(T)\) is bounded from above by a constant \(C < \infty\) for every transport map \(T\):
\begin{align}
c(T) &= \int_{\R^d}\norm{x-T(x)}\mu(\diff x)\nonumber\\
&\leq \int_{\R^d}\norm{x}\mu(\diff x) + \int_{\R^d}\norm{T(x)}\mu(\diff x)\nonumber\\
&\leq \int_{\R^d}\norm{x}\mu(\diff x) + \max_{p\in S} \lVert p\rVert \int_{\R^d}\mu(\diff x)\nonumber\\
&= \int_{\R^d}\norm{x}\mu(\diff x) + \max_{p\in S} \lVert p\rVert\nonumber\\
&\defeqback C < \infty \label{ineq:C}
\end{align}
where from the first to the second line we used the triangle inequality.
\end{remark}

\begin{definition}
The Wasserstein distance \(\Wass(\mu, \nu)\) between \(\mu\) and \(\nu\) is given by \(\sqrt{c(U)}\) where \(U\) is a transport map from \(\mu\) to \(\nu\) with minimal cost.
We refer to such a map as an \emph{optimal transport map}.
\end{definition}
We can think of the Wasserstein distance as a value that indicates how far the masses of \(\mu\) and \(\nu\) are separated from each other in \(\R^d\).\\
In \hyperref[sec:Opt_Trans]{Chapter \ref*{sec:Opt_Trans}} we will show that an optimal transport map as above always exists. To construct it explicitly, we need a geometric object called \emph{weighted Voronoi diagram}.

\section{Additively Weighted Voronoi Diagrams}
\begin{definition}
\label{def:voronoi}
Let \(M = \{m_1, \dots, m_k\}\) be a finite set of point sites in \(\R^d\) and \(w\in\R^k\) a so called \emph{weight vector} that assigns to each site \(m_i\) a weight \(w_i\).
The \emph{Voronoi cell} of the site \(m_i\) is defined as
\[\Vor_M^w(i) \defeq \Set{x\in\R^d \given \norm{x-m_i} - w_i \leq \norm{x-m_j} - w_j \quad\text{for all } j \neq i}.\]
The \emph{additively weighted Voronoi diagram} is the collection \(\left(\Vor_M^w(i)\right)_i\) of all Voronoi cells. Note that the interior of the cells together with the union of their boundaries form a partition of \(\R^d\).
\end{definition}

By this definition it becomes clear that the Voronoi diagram does not change if we add the same constant to each entry of the weight vector.

A more intuitive definition is the following: Imagine each site \(m_i\) as surrounded by a ball with radius \(w_i\). Then \(\Vor_M^w(i)\) contains exactly the points that are not closer to any other ball than to that around the i-th site. Here, points lying inside a ball have a negative distance to said ball. Therefore, sites whose balls are contained completely in another ball have an empty Voronoi cell. In the two-dimensional case the boundary between two non-empty cells is a straight line if their weights are equal and a section of a hyperbola otherwise.

\begin{figure}
	\includegraphics[width=\textwidth]{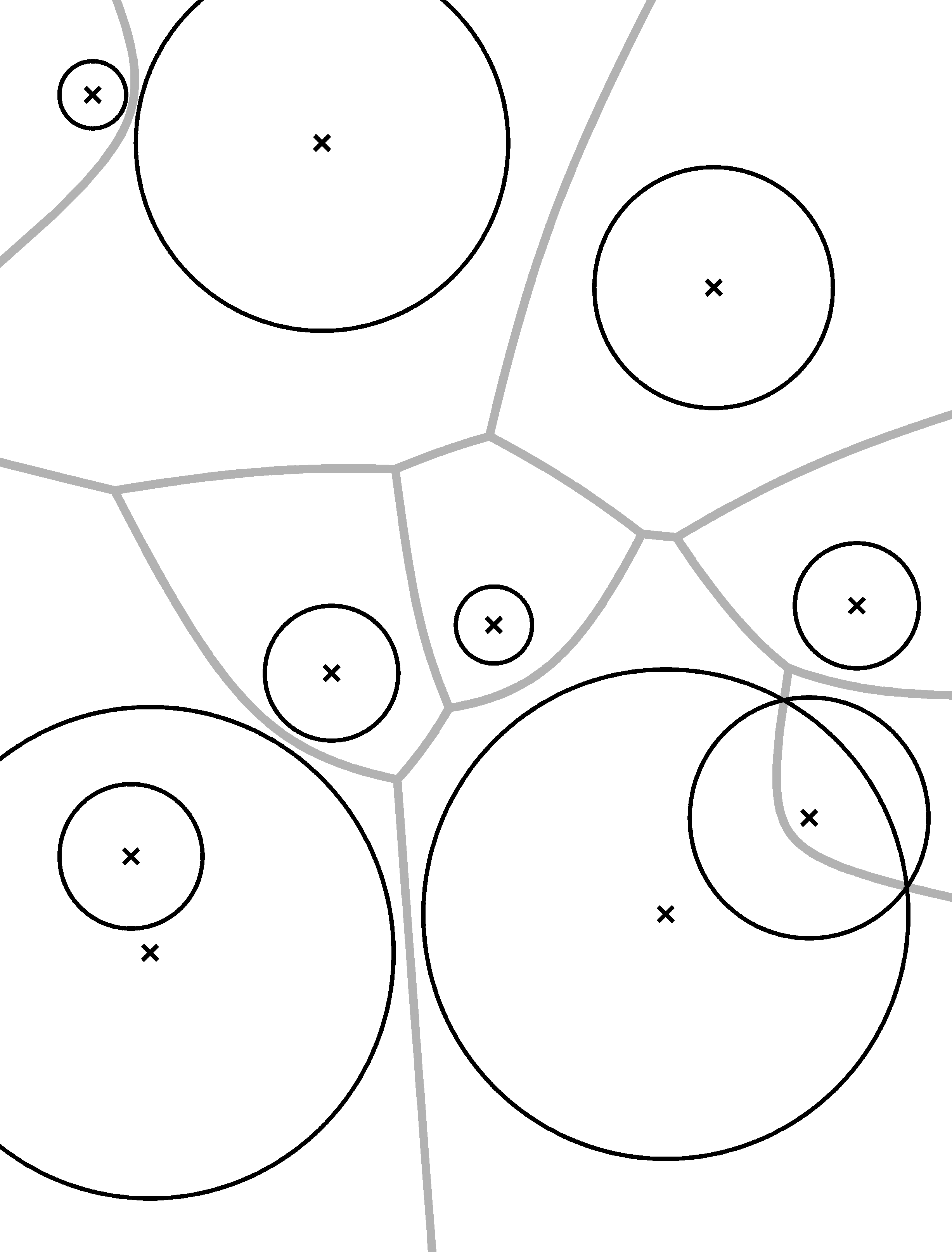}
	\caption{A Voronoi diagram with 10 sites in \(\R^2\).}
\end{figure}

\begin{definition}
\label{def:Voronoi_Transport}
Given such a Voronoi diagram, we denote the function that maps each point in \(\R^d\) to the center of the Voronoi cell containing it by \(T_M^w\). For its well-definedness see the remark below.
\end{definition}

\begin{remark}
\(T_M^w\) is \(\mu\)-almost everywhere well-defined; the boundaries of the Voronoi cells on which it is not well-defined are null sets. Together with the continuity result from \hyperref[lemma:continuity]{Lemma \ref*{lemma:continuity}} this follows from Lemma 1 in \cite{GKPR13}. Note that for \(d=2\) it can also be seen from \hyperref[ssec:Drawing]{Section~\ref*{ssec:Drawing}} where we show that in this case the boundaries are one-dimensional manifolds.
\end{remark}

\chapter{Finding the Optimal Transport Map}
\label{sec:Opt_Trans}

In this chapter we show that an optimal transport map from \(\mu\) to \(\nu\) always exists and that it is unique. We obtain this transport map by creating an additively weighted Voronoi diagram around the points in the support \(S\) of \(\nu\) and assigning each point in \(\R^d\) to the center of the Voronoi cell containing it. In order to get a valid transport map from a Voronoi diagram, we need the notion of an \emph{adapted weight vector}.

\begin{definition}
A weight vector \(w\) as in \hyperref[def:voronoi]{Definition \ref*{def:voronoi}} is called \emph{adapted} to the pair \((\mu, \nu)\) if
\[\mu(\Vor_S^w(i)) = \lambda_i \ [=\nu(\{s^i\})]\]
for every \(i\in\{1, \dots, n\}\).\\
By \hyperref[def:pushforward]{Definition \ref*{def:pushforward}}, the condition can be rewritten as
\[\left(T_S^{w}\right)_\# \mu = \nu\]
since
\[\left(\left(T_S^{w}\right)_\#\mu\right)(\{s^i\}) = \mu\left(\left(T_S^{w}\right)^{-1}(\{s^i\})\right) = \mu(\Vor_S^{w}(i)).\]
\end{definition}

We begin with the uniqueness result whose proof follows that of Theorem~2 in \cite{GKPR13}.

\begin{theorem}
\label{thm:uniqueness}
For every \(w\in\R^n\), \(T_S^{w}\) is the \(\mu\)-almost surely unique optimal transport map from \(\mu\) to \(\left(T_S^{w}\right)_\# \mu\).\\
In other words, if \(w\) is adapted to \((\mu, \nu)\), then \(T_S^{w}\) is the \(\mu\)-almost surely unique optimal transport map from \(\mu\) to \(\nu\).
\end{theorem}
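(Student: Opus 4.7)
The strategy is to read the Voronoi defining inequality as a pointwise optimality condition and then integrate it against $\mu$; the marginal constraint $T'_\# \mu = (T_S^w)_\# \mu$ will cancel the weight terms, yielding global optimality. Let me write $\eta \defeq (T_S^w)_\# \mu$ and fix an arbitrary competing transport map $T'$ with $T'_\# \mu = \eta$. For $\mu$-a.e.\ $x$, define $i(x)$ and $j(x)$ by $T_S^w(x) = s^{i(x)}$ and $T'(x) = s^{j(x)}$. Since $x \in \Vor_S^w(i(x))$, Definition~\ref{def:voronoi} gives the pointwise inequality
\[\norm{x - s^{i(x)}} - w_{i(x)} \leq \norm{x - s^{j(x)}} - w_{j(x)}.\]

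The first step is to integrate this inequality against $\mu$ and rearrange the weight contributions. The right-hand side is $\mu$-integrable because both $c(T')$ is bounded (by inequality~\eqref{ineq:C}) and the weights are bounded on the finite support $S$. After integration I obtain
\[c(T_S^w) - \int_{\R^d} w_{i(x)}\, \mu(\diff x) \leq c(T') - \int_{\R^d} w_{j(x)}\, \mu(\diff x).\]
The crucial observation is that both weight integrals coincide: since $T_S^w$ and $T'$ share the same push-forward $\eta$,
\[\int_{\R^d} w_{i(x)}\, \mu(\diff x) = \sum_{k=1}^n w_k\, \mu\bigl((T_S^w)^{-1}(s^k)\bigr) = \sum_{k=1}^n w_k\, \eta(\{s^k\}) = \int_{\R^d} w_{j(x)}\, \mu(\diff x).\]
This cancellation reduces the inequality to $c(T_S^w) \leq c(T')$, establishing optimality.

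For the $\mu$-almost sure uniqueness, I would assume $c(T') = c(T_S^w)$ and read the preceding argument backwards. The integrand $(\norm{x - s^{j(x)}} - w_{j(x)}) - (\norm{x - s^{i(x)}} - w_{i(x)})$ is nonnegative and has vanishing integral, so it is zero $\mu$-a.e. Wherever $i(x) \neq j(x)$, equality in the Voronoi inequality forces $x \in \Vor_S^w(i(x)) \cap \Vor_S^w(j(x))$, i.e.\ $x$ lies on the common boundary of two distinct Voronoi cells. The union of all such boundaries has $\mu$-measure zero (this is exactly the null set appearing in the remark after Definition~\ref{def:Voronoi_Transport}), so outside a $\mu$-null set we must have $i(x) = j(x)$, i.e.\ $T'(x) = T_S^w(x)$. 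The ``in other words'' reformulation is then the special case $\eta = \nu$ obtained from adaptedness.

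The only delicate point, and the one I would pay the most attention to, is the bookkeeping that turns the integrals of $w_{i(x)}$ and $w_{j(x)}$ into the same finite sum over $S$: this requires that $i$ and $j$ are measurable selections (which is immediate since $T_S^w$ and $T'$ take values in the finite set $S$) and that the dependence of the argument on $w$ truly washes out under the shared-marginal constraint. Everything else is a direct consequence of the defining inequality of the Voronoi diagram and the fact that its boundary skeleton is $\mu$-negligible.
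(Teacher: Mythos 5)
Your proposal is correct and follows essentially the same route as the paper's proof: both rest on integrating the pointwise Voronoi inequality, cancelling the two weight integrals via the shared push-forward, and invoking the \(\mu\)-negligibility of the cell boundaries. The only difference is organizational --- you establish optimality first and then analyse the equality case, whereas the paper argues by contradiction, deriving a strict inequality \(c(T_S^w)<c(T')\) whenever the two maps differ on a set of positive \(\mu\)-measure.
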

\begin{proof}
Let \(w\in\R^n\) adapted to \((\mu, \nu)\). For simplicity, write \(T^*\defeq T_S^w\) and let \(T\) be a different transport map from \(\mu\) to \(\nu\) that differs from \(T^*\) on a set \(M\defeq \Set{x\given T(x)\neq T^*(x)}\) of positive measure, that is, \(\mu(M)>0\). By removing the boundaries of the Voronoi cells from \(M\), we get a set \(\tilde{M}\) which still has positive measure since the boundaries are null sets. Then, by definition of the Voronoi diagram, we find that for each \(x\in\tilde{M}\)
\[\norm{x-T^*(x)} - w_{T^*(x)} < \norm{x-T(x)} - w_{T(x)}\]
which leads to
\begin{equation}
\label{eq:ineq_int_transport}
\int_{\R^n} \left(\norm{x-T^*(x)} - w_{T^*(x)}\right) \mu(\diff x) < \int_{\R^n} \left(\norm{x-T(x)} - w_{T(x)}\right) \mu(\diff x).
\end{equation}
To see this, let
\[f(x)\defeq \left(\norm{x-T(x)} - w_{T(x)}\right) - \left(\norm{x-T^*(x)} - w_{T^*(x)}\right).\]
\(f\) is positive on \(\tilde{M}\) and 0 everywhere else, therefore,
\[\tilde{M} = \Set{x\given f(x) \geq 1} \cup \left(\bigcup_{k\in\N} \Set[\bigg]{x\given\frac{1}{k}>f(x)\geq\frac{1}{k+1}}\right).\]
By the \(\sigma\)-additivity of \(\mu\) we get that
\begin{align*}
0 < \mu(\tilde{M}) &= \mu\left(\Set{x\given f(x) \geq 1} \cup \left(\bigcup_{k\in\N} \Set[\bigg]{x\given\frac{1}{k}>f(x)\geq\frac{1}{k+1}}\right)\right)\\
&= \mu\left(\Set{x\given f(x) \geq 1}\right) + \sum_{k\in\N} \mu\left( \Set[\bigg]{x\given\frac{1}{k}>f(x)\geq\frac{1}{k+1}}\right),
\end{align*}
meaning that at least one of the sets partitioning \(\tilde{M}\) has positive measure. Hence, there exists an \(m\in\N\) such that \(\mu\left(\Set{x\given f(x)\geq\frac{1}{m}}\right) > 0\), implying
\[\int_{\R^n} f(x)\mu(\diff x) \geq \int_{\Set{x\given f(x)\geq\frac{1}{m}}} \frac{1}{m}\mu(\diff x) > 0\]
which proves \eqref{eq:ineq_int_transport}.
Now we can use \eqref{eq:ineq_int_transport} to obtain the following inequality:
\begin{align*}
c(T^*) &= \int_{\R^n} \norm{x-T^*(x)} \mu(\diff x)\\
&= \int_{\R^n} \left(\norm{x-T^*(x)} - w_{T^*(x)} + w_{T^*(x)}\right)\mu(\diff x)\\
&= \int_{\R^n} \left(\norm{x-T^*(x)} - w_{T^*(x)}\right)\mu(\diff x) + \int_{\R^n} w_{T^*(x)}\mu(\diff x)\\
&< \int_{\R^n} \left(\norm{x-T(x)} - w_{T(x)}\right)\mu(\diff x) + \int_{\R^n} w_{T^*(x)}\mu(\diff x)\\
&= c(T) - \int_{\R^n} w_{T(x)}\mu(\diff x) + \int_{\R^n} w_{T^*(x)}\mu(\diff x)
\end{align*}
Both \(T\) and \(T^*\) are transport maps: \(T_\#\mu = \nu = T_\#^*\mu\). Thus,
\begin{align*}
\int_{\R^n} w_{T(x)}\mu(\diff x) &= \sum_{i=1}^n w_i T_\#\mu(\{s^i\})\\
&= \sum_{i=1}^n w_i T_\#^*\mu(\{s^i\})\\
&= \int_{\R^n} w_{T^*(x)}\mu(\diff x)
\end{align*}
and
\[c(T) - \int_{\R^n} w_{T(x)}\mu(\diff x) + \int_{\R^n} w_{T^*(x)}\mu(\diff x) = c(T).\]
\end{proof}

Note that not just the optimal transport map is only almost surely unique but also the Voronoi diagram we obtain \(T_S^w\) from; meaning there might be other Voronoi diagrams that produce the same transport map. Indeed, if the support of \(\rho\) is not pathwise connected, then moving the boundaries of a cell in the gap between two connected components only results in a change of the transport map on a set of measure zero and thus leaves its cost unaffected.\\
For the case of a pathwise connected support, however, uniqueness has been shown by Geiß et al. in \cite[Section 5]{GKPR13}.

The existence theorem requires that the volume of a Voronoi cell changes continuously as its boundary sweeps across the space. This is formalized in the following lemma.

\begin{lemma}
\label{lemma:continuity}
The map
\[w \mapsto \mu(\Vor_S^w(i))\]
is continuous.
\end{lemma}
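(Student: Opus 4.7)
The plan is to reduce the statement to the dominated convergence theorem applied to the indicator functions of the cells. Writing $\mu(\Vor_S^w(i))=\int_{\R^d}\mathbf{1}_{\Vor_S^w(i)}(x)\,\mu(\diff x)$, I would take a weight vector $w\in\R^n$ together with an arbitrary sequence $w^{(k)}\to w$ and show that $\mathbf{1}_{\Vor_S^{w^{(k)}}(i)}\to\mathbf{1}_{\Vor_S^w(i)}$ holds $\mu$-almost everywhere. Since $\mu$ is a probability measure, the constant function $1$ then serves as an integrable dominating function and dominated convergence delivers the conclusion.

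To isolate the possible points of discontinuity, set $\varphi_j(x,w')\defeq\norm{x-s^j}-w'_j$ and define
\[B\defeq \bigcup_{j\neq i}\Set{x\in\R^d\given \varphi_i(x,w)=\varphi_j(x,w)}.\]
For any fixed $x\in\R^d\setminus B$ one of two things happens: either $\varphi_i(x,w)<\varphi_j(x,w)$ for every $j\neq i$, in which case $x$ lies in the interior of $\Vor_S^w(i)$; or there exists $j\neq i$ with $\varphi_j(x,w)<\varphi_i(x,w)$, in which case $x\notin\Vor_S^w(i)$. In both cases the strict inequalities persist on a whole neighbourhood of $w$ by continuity of $w'\mapsto\varphi_j(x,w')$, so $\mathbf{1}_{\Vor_S^{w^{(k)}}(i)}(x)=\mathbf{1}_{\Vor_S^w(i)}(x)$ for all sufficiently large $k$. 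It then only remains to check that $B$ is a $\mu$-null set.

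This last step is where I expect the real work to lie, and it is the only place where Assumption~\ref{ass_measures} enters. Each component of $B$ is the solution set of an equation of the form $\norm{x-s^i}-\norm{x-s^j}=c$, which is either empty or, for $s^i\neq s^j$, a sheet of a hyperboloid of revolution with foci $s^i$ and $s^j$ (a hyperplane when $c=0$); in all cases it is a $(d-1)$-dimensional smooth hypersurface outside of the two foci and therefore carries zero $d$-dimensional Lebesgue measure. As $B$ is a finite union of such sets and $\mu$ is absolutely continuous with respect to the Lebesgue measure, $\mu(B)=0$, and dominated convergence finishes the proof.
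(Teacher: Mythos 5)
Your proof is correct and takes a genuinely different route from the thesis. The thesis perturbs only the $i$-th weight by $\pm\frac{1}{m}$, shows that the resulting cells $A_m$ decrease to $\Vor_S^w(i)$ and that the $B_m$ increase to its interior, applies continuity of $\mu$ from above and below together with the fact that $\partial\Vor_S^w(i)$ is a null set, and finally reduces an arbitrary perturbation $v$ to the coordinate perturbations $\pm 2v_{\text{max}}e_i$ by a sandwich argument. You instead handle an arbitrary sequence $w^{(k)}\to w$ in one stroke: pointwise convergence of the indicator functions off the union $B$ of the bisector sets, then dominated convergence with the constant $1$ as dominating function. This is shorter, avoids the two-stage reduction, and has the additional merit of actually proving the null-set fact that the thesis only asserts (with a pointer to \cite{GKPR13}); both arguments ultimately rest on the same two inputs, namely the geometry of the bisectors and the absolute continuity of $\mu$. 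One imprecision in your classification of the level sets: $\Set{x\given\norm{x-s^i}-\norm{x-s^j}=c}$ is a hyperboloid sheet (or hyperplane) only when $\abs{c}<\norm{s^i-s^j}$; when $\abs{c}=\norm{s^i-s^j}$ it degenerates to a closed ray issuing from one of the foci, and when $\abs{c}>\norm{s^i-s^j}$ it is empty. For $d\geq 2$ a ray is still Lebesgue-null, so your conclusion is unaffected; for $d=1$, however, that degenerate set is a half-line of positive Lebesgue measure, the lemma itself can fail (take $\mu$ with mass on that half-line and let $w_i-w_j$ cross the critical value), and the thesis's own proof breaks at exactly the same spot, in the claim $\bigcup_m B_m=\Vor_S^w(i)^\circ$. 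So this is a caveat about the statement in dimension one rather than a defect of your argument in the regime $d\geq 2$ in which the thesis actually works.
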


\begin{proof}
Let \(w\) fixed and let
\[A_m\defeq\Vor_S^{w+\frac{1}{m}e_i}(i), \quad B_m\defeq\Vor_S^{w-\frac{1}{m}e_i}(i).\]
By definition of \(\Vor_S^w(i)\) it holds that \(A_{m+1} \subset A_m\) and \(B_{m} \subset B_{m+1}\).\\
We first show that
\[\bigcap_m A_m = \Vor_S^w(i) \quad\text{and}\quad \bigcup_m B_m = \Vor_S^w(i)^\circ\]
where \(\Vor_S^w(i)^\circ\) denotes the interior of the Voronoi cell \(\Vor_S^w(i)\).\\
Let \(x\notin\Vor_S^w(i)\). This means that there exists a \(k\in\{1, \dots, n\}\setminus\{i\}\) such that
\[d_k^i\defeq \lVert x-s^k\rVert - w(k) - \lVert x-s^i\rVert + w(i) < 0.\]
Since \(\frac{1}{m}\rightarrow 0\), we can find an \(M\in\N\) with \(d_k^i + \frac{1}{m} < 0\) for all \(m\geq M\). This implies \(x\notin A_m\) for all \(m\geq M\). We further observe that \(\Vor_S^w(i) \subset A_m\) for all~\(m\). Hence, \(\bigcap_m A_m = \Vor_S^w(i)\).\\
Now let \(x\in\Vor_S^w(i)^\circ\). With the notation from above this is equivalent to \(d_i^k < 0\) for all \(k\neq i\) (notice that \(i\) and \(k\) switched positions). Again, there exists an \(M\in\N\) such that \(d_i^k + \frac{1}{m} < 0\) for all \(m\geq M\) and all \(k\neq i\). So \(x\in B_m\) for all \(m\geq M\). Obviously, \(B_m \subset \Vor_S^w(i)^\circ\) for all \(m\). This proves the claim \(\bigcup_m B_m = \Vor_S^w(i)^\circ\).

Let \(\varepsilon>0\). With the continuity of \(\mu\) from above we find an \(M_A\in\N\) such that
\[\abs{\mu(\Vor_S^w(i)) - \mu(A_m)}<\varepsilon \quad\text{for all } m\geq M_A\]
and with the continuity of \(\mu\) from below we find an \(M_B\in\N\) such that
\[\abs{\mu(\Vor_S^w(i)) - \mu(B_m)} \stackrel{(*)}{=} \abs{\mu(\Vor_S^w(i)^\circ) - \mu(B_m)}<\varepsilon \quad\text{for all } m\geq M_B\]
where (*) holds true because the boundary of a Voronoi cell is a null set.\\
These two inequalities imply
\begin{equation}
\label{ineq_size_cell}
\abs{\mu(\Vor_S^w(i)) - \mu(\Vor_S^{w+re_i}(i))}<\varepsilon \quad\text{for all } \abs{r}\leq \frac{1}{\max\{M_A, M_B\}}
\end{equation}
because
\[B_{\left\lfloor\frac{1}{\abs{r}}\right\rfloor} = \Vor_S^{w-\left\lfloor\frac{1}{\abs{r}}\right\rfloor^{-1}e_i}(i) \subseteq \Vor_S^{w-\abs{r}e_i}(i) \subseteq \Vor_S^w(i)\]
and
\[\Vor_S^w(i) \subseteq \Vor_S^{w+\abs{r}e_i}(i) \subseteq \Vor_S^{w+\left\lfloor\frac{1}{\abs{r}}\right\rfloor^{-1}e_i}(i) = A_{\left\lfloor\frac{1}{\abs{r}}\right\rfloor}.\]
Let \(v\in\R^n\) and \(v_\text{max}\defeq\max_k\abs{v_k}\leq\norm{v}\). We recall the definition of a Voronoi cell \(\Vor_S^u(i)\): A point \(x\) belongs to \(\Vor_S^u(i)\) if and only if
\[\lVert x-s^i\rVert - \lVert x-s^j\rVert \leq u_i - u_j\quad \text{for all } j\in \{1, \dots, n\}.\]
Therefore, it follows from
\[(w_i - 2v_\text{max}) - w_j \leq (w_i + v_i) - (w_j + v_j) \leq (w_i + 2v_\text{max}) - w_j\]
that
\[\Vor_S^{w-2v_\text{max}e_i}(i) \subseteq \Vor_S^{w+v}(i) \subseteq \Vor_S^{w+2v_\text{max}e_i}(i)\]
and hence
\begin{equation*}
\begin{split}
\abs{\mu(\Vor_S^w(i)) - \mu(\Vor_S^{w+v}(i))} \leq
\max\left\{\abs{\mu(\Vor_S^w(i)) - \mu(\Vor_S^{w-2v_\text{max}e_i}(i))}\right.,\\
\left.\abs{\mu(\Vor_S^w(i)) - \mu(\Vor_S^{w+2v_\text{max}e_i}(i))}\right\}.
\end{split}
\end{equation*}
This yields by \eqref{ineq_size_cell}
\[\abs{\mu(\Vor_S^w(i)) - \mu(\Vor_S^{w+v}(i))} < \varepsilon \quad\text{for all } \norm{v}\leq \frac{1}{2\max\{M_A, M_B\}}\]
which concludes the proof.
\end{proof}

It is not clear from the definition that a weight vector adapted to \((\mu,\nu)\) can be found in the general case. By using the same idea as \cite{Mer11} of transforming the problem into a convex optimization problem, we not only get a proof of existence but also an explicit construction that forms the basis for our algorithm in \hyperref[sec:Algorithm]{Chapter \ref*{sec:Algorithm}}.\\
The function \(\Phi\) we are optimizing is given in the following theorem.

\begin{theorem}
\label{thm:Phi}
Let \(\Phi : \R^n \to \R\),
\[\Phi(w) \defeq \sum_{i=1}^n\left(-\lambda_i w_i - \int_{\Vor_S^w(i)} \left(\lVert x - s^i\rVert - w_i\right)\mu(\diff x)\right).\]
Then:
\begin{theoremenum}
\item \(\Phi\) is convex.
\label{enum:phi1}
\item \(\Phi\) is differentiable with partial derivatives
\[\frac{\partial\Phi}{\partial w_i}(w) = -\lambda_i+\mu(\Vor_S^w(i)).\]
\label{enum:phi2}
\end{theoremenum}
Note that \(\Phi\) can be written as
\[\Phi(w) = \sum_i(-\lambda_i w_i) - \int_{\R^d} (\norm{x - T_S^w(x)} - w_{T_S^w(x)})\mu(\diff x)\]
where we use the notation \(w_{s^i}\defeq w_i\) for convenience.
\end{theorem}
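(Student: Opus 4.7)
The plan is first to rewrite $\Phi$ in the compact form indicated after the statement. Since the cells $\{\Vor_S^w(i)\}_i$ partition $\R^d$ up to their common boundaries, which lie in the finite union over pairs $j\neq k$ of the level sets $\{x:\lVert x-s^j\rVert - w_j = \lVert x-s^k\rVert - w_k\}$, each of which is a $(d-1)$-dimensional hypersurface and therefore Lebesgue-null, hence $\mu$-null by absolute continuity of $\mu$, and since $x\in\Vor_S^w(i)$ precisely when $\lVert x-s^i\rVert - w_i$ attains $\min_j(\lVert x-s^j\rVert - w_j)$, one has
\[\Phi(w) = -\sum_{i=1}^n \lambda_i w_i - \int_{\R^d}\min_{1\leq j\leq n}\bigl(\lVert x-s^j\rVert - w_j\bigr)\,\mu(\diff x).\]
In this form both assertions become transparent.

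For (a), for each fixed $x$ the map $w\mapsto \min_j(\lVert x-s^j\rVert - w_j)$ is a pointwise minimum of finitely many affine functions of $w$, hence concave; its negation is therefore convex. Integrating a family of convex functions against the positive measure $\mu$ preserves convexity, and adding the linear term $-\sum_i\lambda_i w_i$ still gives a convex function.

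For (b), I would compute $\partial\Phi/\partial w_i$ by dominated convergence applied to the difference quotient
\[q_h(x) \defeq \frac{1}{h}\Bigl(\min_j\bigl(\lVert x-s^j\rVert - w_j - h\delta_{ij}\bigr) - \min_j\bigl(\lVert x-s^j\rVert - w_j\bigr)\Bigr).\]
Shifting only the $i$-th entry inside the minimum by $-h$ changes that minimum by a quantity in $[-|h|,0]$ for $h>0$, and symmetrically for $h<0$; therefore $|q_h(x)|\leq 1$ uniformly in $x$, and since $\mu$ is a probability measure the constant $1$ serves as an integrable majorant. For every $x$ in the interior of some Voronoi cell (a set of full $\mu$-measure by the null-set argument above) the unique minimizing index is stable under sufficiently small perturbations of $w$, so $q_h(x)\to -1$ when $x\in\Vor_S^w(i)^\circ$ and $q_h(x)\to 0$ otherwise. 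Dominated convergence then delivers
\[\frac{\partial}{\partial w_i}\int_{\R^d}\min_j(\lVert x-s^j\rVert - w_j)\,\mu(\diff x) = -\mu(\Vor_S^w(i)),\]
and combining this with the derivative $-\lambda_i$ of the linear term yields the claimed formula. Continuity of these partial derivatives is precisely the content of Lemma~\ref{lemma:continuity}, which promotes partial differentiability to (continuous) differentiability of $\Phi$. The main obstacle is the justification of exchanging derivative and integral; the uniform bound $|q_h|\leq 1$ is the key step, and it is the absolute continuity of $\mu$ that renders the Lebesgue-null boundary set $\mu$-negligible and so legitimizes the pointwise a.e.\ convergence required.
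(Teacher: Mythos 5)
Your argument is correct, and while part (a) is in substance the paper's own proof, part (b) takes a genuinely different route. For convexity the paper writes the nonlinear part as \(\Psi(w)=\inf_{f}\Psi_f(w)\), an infimum over all measurable assignments \(f:\R^d\to S\) of the affine functionals \(\Psi_f(w)=\int(\norm{x-f(x)}-w_{f(x)})\mu(\diff x)\); you use the same envelope idea but apply it pointwise, writing the integrand as \(\min_j(\norm{x-s^j}-w_j)\), a minimum of finitely many affine functions of \(w\) for each fixed \(x\), and then integrating --- two phrasings of one argument. For the partial derivatives, however, the paper sandwiches the difference quotient between the two affine linearizations \(\Psi_{T_S^{\tilde{w}}}\) and \(\Psi_{T_S^{w}}\), evaluates both explicitly, and already at that stage needs Lemma~\ref{lemma:continuity} to conclude that \(\mu(\Vor_S^{\tilde{w}}(i))\to\mu(\Vor_S^w(i))\) as \(h\to 0\); you instead apply dominated convergence directly to the pointwise difference quotient, with the uniform bound \(\abs{q_h}\le 1\) (valid because perturbing a single entry of a minimum of affine functions by \(h\) moves the minimum by at most \(\abs{h}\)) as majorant and the \(\mu\)-almost-everywhere stability of the unique minimizing index supplying the pointwise limit. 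Your route therefore obtains the existence of the partial derivatives without invoking Lemma~\ref{lemma:continuity} at all; the lemma is needed only, as you note, to upgrade partial differentiability to total differentiability via continuity of the gradient, whereas the paper's sandwich argument leans on it twice. One small imprecision: the set where the minimizing index is unique is the complement of the finitely many bisector sets \(\Set{x\given\norm{x-s^j}-w_j=\norm{x-s^k}-w_k}\), which in degenerate configurations need not coincide exactly with the union of the cell interiors; but since each bisector is Lebesgue-null and hence \(\mu\)-null, this does not affect the argument.
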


\begin{proof}
We will examine the non-linear part \(\Psi\) of \(\Phi\):
\begin{align*}
\Phi(w) &= \sum_i (-\lambda_i w_i) - \Psi(w), \quad \text{where}\\
\Psi(w) &\defeq \int_{\R^d} (\norm{x - T_S^w(x)} - w_{T_S^w(x)})\mu(\diff x)\\
&= \sum_i\int_{\Vor_S^w(i)} \left(\lVert x - s^i\rVert - w_i\right)\mu(\diff x).
\end{align*}\\
We show that \(\Psi\) is concave and therefore \(-\Psi\) is convex. Statement \ref{enum:phi1} then follows from the fact that linear functions are convex which makes \(\Phi\) the sum of two convex functions.

To this end, let \(F\defeq\Set{f:\R^d\to S \given f\ \text{measurable}}\) be the set of all measurable maps from \(\R^d\) to \(S\). The definition of \(T_S^w\) yields that for every \(f\in F\) and for every \(x\in\R^d\) the inequality
\[\norm{x - T_S^w(x)} - w_{T_S^w(x)} \leq \norm{x - f(x)} - w_{f(x)}\]
holds since \(T_S^w\) maps each point \(x\) to its nearest neighbor in \(S\) in terms of the Voronoi diagram. Hence,
\begin{equation}
\label{eq:ineq_psi_f}
\Psi(w) \leq \Psi_f(w) \defeq \int_{\R^d} (\norm{x - f(x)} - w_{f(x)})\mu(\diff x)
\end{equation}
with equality for \(f = T_S^w\) which allows us to write \(\Psi\) as
\[\Psi(w) = \inf_{f\in F} \Psi_f(w).\]
Since all \(\Psi_f\) are affinely linear in \(w\), they are concave. This makes \(\Psi\) the infimum of a set of concave functions and therefore concave. Indeed, using the concavity of \(\Psi_f\) for the first inequality below, we get for arbitrary \(v, w \in \R^n\) and \(\alpha\in [0,1]\):
\begin{align*}
\Psi(\alpha v + (1-\alpha) w) &= \inf_f \Psi_f(\alpha v + (1-\alpha) w)\\
&\geq \inf_f \left(\alpha \Psi_f(v) + (1-\alpha) \Psi_f(w)\right)\\
&\geq \alpha \inf_f \Psi_f(v) + (1-\alpha) \inf_f \Psi_f(w)\\
&= \alpha \Psi(v) + (1-\alpha) \Psi(w)
\end{align*}

In order to prove statement \ref{enum:phi2}, we show that the partial derivatives of \(\Psi\) exist and that
\[\frac{\partial\Psi}{\partial w_i}(w) = -\mu(\Vor_S^w(i))\]
because
\[\frac{\partial\Phi}{\partial w_i}(w) = -\lambda_i-\frac{\partial\Psi}{\partial w_i}.\]
Let \(h\neq0\) and \(\tilde{w} \defeq w+he_i\). Inequality \eqref{eq:ineq_psi_f} implies

\begin{align*}
\frac{\Psi(\tilde{w}) - \Psi(w)}{h} &= \frac{\Psi_{T_S^{\tilde{w}}}(\tilde{w}) - \Psi(w)}{h}
\geq \frac{\Psi_{T_S^{\tilde{w}}}(\tilde{w}) - \Psi_{T_S^{\tilde{w}}}(w)}{h},\\
\frac{\Psi(\tilde{w}) - \Psi(w)}{h} &= \frac{\Psi(\tilde{w}) - \Psi_{T_S^{w}}(w)}{h}
\leq \frac{\Psi_{T_S^{w}}(\tilde{w}) - \Psi_{T_S^{w}}(w)}{h}.
\end{align*}
If we combine the two lines, we get
\begin{equation*}
\begin{split}
\abs{\frac{\Psi(\tilde{w}) - \Psi(w)}{h} + \mu(\Vor_S^w(i))}
\leq \max\left\{\abs{\frac{\Psi_{T_S^{\tilde{w}}}(\tilde{w}) - \Psi_{T_S^{\tilde{w}}}(w)}{h} + \mu(\Vor_S^w(i))}\right., \\
\left.\abs{\frac{\Psi_{T_S^{w}}(\tilde{w}) - \Psi_{T_S^{w}}(w)}{h} + \mu(\Vor_S^w(i))}\right\}.
\end{split}
\end{equation*}
We may rewrite \(\Psi_{T_S^u}(v)\) as
\[\Psi_{T_S^u}(v) = \sum_i\left(\int_{\Vor_S^u(i)}\lVert x - s^i\rVert\mu(dx) - v(i)\mu(\Vor_S^u(i))\right)\]
and therefore obtain
\begin{align*}
\abs{\frac{\Psi_{T_S^{\tilde{w}}}(\tilde{w}) - \Psi_{T_S^{\tilde{w}}}(w)}{h} + \mu(\Vor_S^w(i))}
&= \abs{-\mu(\Vor_S^{\tilde{w}}(i)) + \mu(\Vor_S^w(i))}, \\
\abs{\frac{\Psi_{T_S^{w}}(\tilde{w}) - \Psi_{T_S^{w}}(w)}{h} + \mu(\Vor_S^w(i))} &= 0.
\end{align*}
This yields
\[\abs{\frac{\Psi(\tilde{w}) - \Psi(w)}{h} + \mu(\Vor_S^w(i))} \leq \abs{-\mu(\Vor_S^{\tilde{w}}(i)) + \mu(\Vor_S^w(i))} \xrightarrow{h \to 0} 0\]
which proves the statement about the partial derivatives. We used here the continuity of \(w \mapsto \mu(\Vor_S^w(i))\) from the above lemma which also directly implies the continuity of the partial derivatives and hence the total differentiability of \(\Psi\) and \(\Phi\).
\end{proof}

\begin{remark}
For any \(w^* \in \argmin_w \Phi(w)\) \hyperref[thm:Phi]{Theorem \ref*{thm:Phi}} yields that
\[-\lambda_i+\mu(\Vor_S^{w^*}(i)) = 0 \quad\text{for all } i\in\{1, \dots, n\}\]
since the gradient of \(\Phi\) vanishes at such a \(w^*\). Thus, \(w^*\) is adapted to \((\mu, \nu)\) and \hyperref[thm:uniqueness]{Theorem \ref*{thm:uniqueness}} assures us that \(T_S^{w^*}\)  is the \(\mu\)-almost surely unique optimal transport map from \(\mu\)~to~\(\nu\). But for the existence of this induced map we still require that
\[\argmin_{w\in\R^n} \Phi(w) \neq \emptyset.\]
\end{remark}

\begin{theorem}
\(\Phi\) reaches its infimum at a weight vector \(w\in\R^n\).
\end{theorem}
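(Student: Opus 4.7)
My plan is to exploit the translation invariance of $\Phi$ to reduce the problem to showing coercivity on a hyperplane transversal to $\R\mathbf{1}$. A short computation using $\sum_i \lambda_i = \mu(\R^d) = 1$ shows that $\Phi(w + c\mathbf{1}) = \Phi(w)$ for every $c \in \R$: adding a constant to every weight leaves the Voronoi diagram unchanged, and the $\pm c$ contributions from the two summands of $\Phi$ cancel. Hence it suffices to minimize $\Phi$ on the hyperplane $H \defeq \Set{w \in \R^n \given \sum_i \lambda_i w_i = 0}$, which meets every $\R\mathbf{1}$-coset in exactly one point.

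Next I would produce a lower bound on $\Phi$ by plugging the constant maps $f \equiv s^k$ into the infimum representation $\Psi(w) = \inf_{f \in F} \Psi_f(w)$ from the proof of \hyperref[thm:Phi]{Theorem \ref*{thm:Phi}}. Each such choice gives $\Psi(w) \leq \int \norm{x - s^k}\mu(\diff x) - w_k \leq C - w_k$ with $C$ the constant from \eqref{ineq:C}; taking the infimum over $k$ yields $\Psi(w) \leq C - \max_k w_k$, so $-\Psi(w) \geq \max_k w_k - C$. Substituting into $\Phi = -\sum_i \lambda_i w_i - \Psi$ and using $w \in H$ produces the clean inequality $\Phi(w) \geq \max_k w_k - C$.

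It remains to check that $\max_k w_k \to \infty$ as $\norm{w} \to \infty$ within $H$. The identity $\sum_i \lambda_i w_i = 0$ splits as $\sum_{i : w_i > 0} \lambda_i w_i = \sum_{i : w_i < 0} \lambda_i \lvert w_i\rvert$; with $\lambda_{\min} \defeq \min_i \lambda_i > 0$, the left side is bounded above by $\max_k w_k$ while the right side is bounded below by $\lambda_{\min} \cdot \max_{i : w_i < 0} \lvert w_i\rvert$. This should quickly yield $\max_i \lvert w_i\rvert \leq \lambda_{\min}^{-1} \max_k w_k$, and combined with the continuity of $\Phi$ from \hyperref[thm:Phi]{Theorem \ref*{thm:Phi}}, every sublevel set of $\Phi|_H$ is compact, so the infimum is attained.

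The step I expect to demand the most care is the coercivity argument, since $\Phi$ is visibly not coercive on $\R^n$ because of its $\R\mathbf{1}$-invariance and one must pick the right transversal hyperplane. Working with $H = \Set{w \given \sum_i \lambda_i w_i = 0}$ rather than, say, $\Set{w \given w_n = 0}$ makes the linear term of $\Phi$ vanish identically on $H$, so the almost trivial lower bound on $-\Psi$ coming from the constant competitors in the infimum representation is already sharp enough to conclude, without having to invoke the general machinery of recession functions.
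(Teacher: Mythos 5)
Your proof is correct, but it takes a genuinely different route from the one in the paper. The paper argues by contradiction along a minimizing sequence: after normalizing by translation invariance it extracts nested subsequences that split the index set into a part $A$ where the gaps $w_i^l-w_j^l$ stay bounded and a part $B$ where they diverge, then shows via a geometric argument (the balls of the $B$-sites eventually being swallowed, so that $\sum_{a\in A}\mu(\Vor_S^{w^l}(a))=1$) that a nonempty $B$ forces $\Phi(w^l)\to\infty$; Bolzano--Weierstrass then finishes the bounded case. You instead prove outright coercivity of $\Phi$ on the transversal hyperplane $H=\Set{w\given\sum_i\lambda_iw_i=0}$: the constant competitors $f\equiv s^k$ in $\Psi(w)=\inf_{f}\Psi_f(w)$ give $-\Psi(w)\geq\max_kw_k-C$, the linear term vanishes on $H$, and the elementary estimate $\norm{w}_\infty\leq\lambda_{\min}^{-1}\max_kw_k$ on $H$ (valid since $\max_kw_k\geq 0$ there and $\sum_i\lambda_i=1$) yields $\Phi(w)\geq\lambda_{\min}\norm{w}_\infty-C$, hence compact sublevel sets and attainment. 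Your argument is shorter, avoids the subsequence bookkeeping and the ball-containment claim entirely, and produces an explicit quantitative lower bound; the paper's argument, while more intricate, stays closer to the geometry of the Voronoi cells and does not require singling out a normalizing hyperplane. Both proofs rest on the same two ingredients --- translation invariance and the uniform cost bound $C$ from \eqref{ineq:C} --- so the difference is one of packaging, with yours being the cleaner of the two.
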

\begin{proof}
Let \((w^k)_{k\in\N}\), \(w^k\in\R^n\), a sequence with
\[\lim_{k\to\infty} \Phi(w^k) = \inf_{w\in\R^n} \Phi(w).\]
Our goal is to find a bounded subsequence \((v^k)\) of \((w^k)\). Then, by the Bolzano-Weierstrass theorem, there exists a convergent subsequence \((u^k)\) of \((v^k)\) which, as it is a subsequence of \((w^k)\), still satisfies \(\lim_{k\to\infty} \Phi(u^k) = \inf_{w\in\R^n} \Phi(w)\). Since \(\Phi\) is continuous, it thus takes its infimum at \(u\defeq\lim_{k\to\infty} u^k \in\R^n\).

First note that adding the same constant to each of the entries of the argument of \(\Phi\) leaves its value unchanged: For arbitrary \(r\in\R\) and \(c\defeq(r, \dots, r) \in\R^n\) we have
\begin{align*}
&\Phi(w+c)\\
&= \sum_i\left(-\lambda_i (w_i+r) - \int_{\Vor_S^{w+c}(i)} \left(\lVert x - s^i\rVert - (w_i+r)\right)\mu(\diff x)\right)\\
&= \sum_i\left(-\lambda_i (w_i+r) - \int_{\Vor_S^{w}(i)} \lVert x - s^i\rVert \mu(\diff x) + (w_i+r)\mu(\Vor_S^{w}(i))\right)\\
&= \sum_i\left(-\lambda_i w_i - \int_{\Vor_S^{w}(i)} \lVert x - s^i\rVert \mu(\diff x) + w_i\mu(\Vor_S^{w}(i))\right)\\
&\phantom{{}={}}+ r\left(\sum_i-\lambda_i + \sum_i\mu(\Vor_S^{w}(i))\right)\\
&= \Phi(w)
\end{align*}
where in the last step we used that both \(\mu\) and \(\nu\) are probability measures and therefore their masses add up to \(1\).\\
For this reason, we may assume without loss of generality that \(w_i^k\geq 0\) for all \(i\) and \(k\).

Since \((w^k)_k\) is a sequence in \(\R^n\), each \(w^k\) only has a finite number of entries and thus there exists at least one entry \(i\) and an associated infinite set \(K\subset\N\) such that \(w_i^k \geq w_j^k\) for all \(j\in\{1, \dots, n\}\) and all \(k\in K\), giving us our first subsequence \((w^k)_{k\in K}\).\\
We are now going to iteratively create refined subsequences of \((w^k)_{k\in K}\). Consider the sequences \((w_i^k)_{k\in K}\) and \((w_1^k)_{k\in K}\). There either exists a subsequence \((w^l)_{l\in L_1\subset K}\) fulfilling \(w_i^l-w_1^l\leq R_1\) for a constant \(R_1\geq 0\) and all \(l\in L_1\) or a subsequence such that \(w_i^l - w_1^l\geq \ind(l)\) for all \(l\in L_1\), or both. Here, \(\ind(l)\) denotes the index of \(l\) in the ordered set \(L_1\). Choose the subsequence that exists and apply the same scheme on \((w_2^l)_{l\in L_1}\), continuing until we reach \((w_n^l)_{l\in L_{n-1}}\).
At the end we are left with a set \(A\subset \{1, \dots, n\}\) with \(i\in A\), a constant \(R \geq 0\) and a subsequence \((w^l)_{l\in L\defeq L_n}\) with the following properties:
\begin{enumerate}[label=\roman*)]
	\mitem 0 \leq w_i^l - w_j^l \leq R \quad \text{for all } j \in A\) \label{enum:subsequence1}
	\mitem w_i^l - w_j^l \geq \ind(l) \quad \text{for all } j \notin A\) \label{enum:subsequence2}
\end{enumerate}
For all \(l\in L\) let
\[w_{\text{min}_A}^l \defeq \min_{j\in A} w_j^l.\]

If \(A=\{1, \dots, n\}\), define the sequence \((v^l)_{l\in L}\) via \(v^l \defeq w^l - (w_{\text{min}_A}^l, \dots, w_{\text{min}_A}^l)\).\\
By property \ref{enum:subsequence1} we have that
\[0 \leq v_j^l \leq R \quad\text{for all } j \in \{1, \dots, n\} \text{ and all } l \in L\]
which makes \((v^l)_{l\in L}\) the bounded sequence we were searching for.

Now consider the case where \(B \defeq \{1, \dots, n\} \backslash A \neq \emptyset\). \ref{enum:subsequence2} implies that \(\lVert w^l\rVert\to\infty\) as \(l\to\infty\), as well as the existence of an \(N\in L\) such that
\[\bigcup_{a\in A} \Vor_S^{w^l}(a) = \R^d \quad\text{for all } l \geq N\]
since at some point all \(w_b^l\)-balls around the sites \(b\in B\) will be completely contained in the \(w_a^l\)-balls around the sites \(a\in A\). Therefore,
\begin{equation}
\label{eq:sum_A}
\sum_{a\in A} \mu\left(\Vor_S^{w^l}(a)\right) = 1 \quad\text{for all } l \geq N.
\end{equation}
Analogously to \(w_{\text{min}_A}^l\), let
\[w_{\text{max}_B}^l \defeq \max_{j\in B} w_j^l\]
and let
\[C_l \defeq \sum_{j=1}^n \int_{\Vor_S^{w^l}(j)}\lVert x-s^j\rVert\mu(\diff x) \leq C\]
for the constant \(C<\infty\) from \eqref{ineq:C}.\\
With this notation we get for arbitrary \(l\in L\) where \(l\geq N\) that
\begin{align*}
&\Phi\left(w^l\right)\\
&= \sum_{j=1}^n\left(-\lambda_j w_j^l - \int_{\Vor_S^{w^l}(j)} \left(\lVert x - s^j\rVert - w_j^l\right)\mu(\diff x)\right)\\
&= \sum_{j=1}^n -\int_{\Vor_S^{w^l}(j)} \lVert x-s^j\rVert \mu(\diff x) + \sum_{j=1}^n w_j^l\left(-\lambda_j + \mu\left(\Vor_S^{w^l}(j)\right)\right)\\
&= -C_l + \sum_{j\in A} w_j^l\left(-\lambda_j + \mu\left(\Vor_S^{w^l}(j)\right)\right) + \sum_{j\in B} w_j^l\left(-\lambda_j + \mu\left(\Vor_S^{w^l}(j)\right)\right)\\
&\geq -C_l + \sum_{j\in A} w_j^l\left(-\lambda_j + \mu\left(\Vor_S^{w^l}(j)\right)\right) - w_{\text{max}_B}^l \sum_{j \in B} \lambda_j\\
&\geq -C + w_{\text{min}_A}^l \left(1 - \sum_{j \in A} \lambda_j\right) - R - w_{\text{max}_B}^l \sum_{j \in B} \lambda_j\\
&= -C + w_{\text{min}_A}^l \sum_{j \in B} \lambda_j - R - w_{\text{max}_B}^l \sum_{j \in B} \lambda_j\\
&= -C - R + \sum_{j \in B} \lambda_j\left(w_{\text{min}_A}^l - w_{\text{max}_B}^l\right)\\
&\geq -C - R + \sum_{j \in B} (\ind(l) - R).
\end{align*}
For the second inequality we used \ref{enum:subsequence1} and \eqref{eq:sum_A} and for the factor \(-1\) in front of \(R\) the fact that
\[\sum_{j\in D} \left(-\lambda_j + \mu\left(\Vor_S^{w^l}(j)\right)\right) = -\nu(D) + \mu\left(\bigcup_{j\in D} \left(\Vor_S^{w^l}(j)\right)\right) \geq -1\]
for all \(D\subset\{1, \dots, n\}\) since both \(\nu\) an \(\mu\) are probability measures. The last inequality is obtained with the help of \ref{enum:subsequence1} and \ref{enum:subsequence2}.\\
Thus, \(\lim_{l\to\infty} \Phi(w^l) = \infty\) since \(\ind(l) \to \infty\) for \(l\to\infty\). But that contradicts our initial assumption that \(\lim_{k\to\infty} \Phi(w^k) = \inf_{w\in\R^n} \Phi(w)\). Therefore, the second case does not occur and we are always in the first case where \(A=\{1, \dots, n\}\) and for which we were able to construct a bounded sequence \((\nu^l)\) such that \(\lim_{l\to\infty} \Phi(v^l) = \inf_{w\in\R^n} \Phi(w)\).
\end{proof}

\chapter{Computation of the Weight Vector}
\label{sec:Algorithm}

With the preparation from \hyperref[sec:Opt_Trans]{Chapter \ref*{sec:Opt_Trans}} we are now able to explicitly compute a weight vector \(w^*\) that gives an optimal transport map from \(\mu\) to \(\nu\) via its corresponding Voronoi diagram. We do this by minimizing the convex function \(\Phi\).\\
The obtained transport map allows for computing the Wasserstein distance between the two measures.

\section{Basic Algorithm}
First, we provide the general algorithm. To increase its performance, we will refine it later by decomposing \(\nu\).

\begin{algorithm}[H]
\caption{Computing \(w^*\) by Minimizing \(\Phi\)}
\begin{algorithmic}[1]
\State \(k\defeq 0\)
\State \(w^0\defeq (0, \dots, 0)\)
	\While{\(\nor{\nabla\Phi(w^k)}_1 > \varepsilon\)}
		\State \(k\defeq k+1\)
		\State apply one convex optimization step on \(\Phi\) starting at \(w^{k-1}\)
		\State assign the obtained vector to \(w^k\)
	\EndWhile
\end{algorithmic}
\end{algorithm}

For the stopping criterion we use the \(\ell_1\) norm:
\[\nor{\nabla\Phi(w^k)}_1 = \sum_i \abs{-\lambda_i+\mu\left(\Vor_S^{w^k}(i)\right)} = \sum_i \abs{-\nu(s^i) + \left(T_S^{w^k}\right)_\#\mu(s^i)}.\]
The gradient of \(\Phi\) measures for each point \(s^i\) in \(S\) how far away we are --- given the current transport map --- from the mass that should be transported to \(s^i\). The \(\ell_1\) norm sums up those mistransported masses. So by the stopping criterion we only allow a total of \(\varepsilon/2\) of mass being mistransported where the factor \(2\) stems from the fact that each such portion of mass counts twice: once for the point where it is missing and once for the point where it is in surplus.

For performing the convex optimization step on \(\Phi\), there are various methods available. We will go more into detail about them in \hyperref[sec:Implementation]{Chapter \ref*{sec:Implementation}}.

\section{\texorpdfstring{Decomposition of \(\nu\)}{Decomposition of v}}
The initial weight vector \(w^0\) can have huge influence on the number of steps we need until being sufficiently close to the weight vector where the minimum of the function \(\Phi\) is reached.\\
\cite{Mer11} proposed a method to find a good starting point which we adopt here. The idea is to decompose \(\nu\) into \(L+1\) discrete measures \(\nu_0, \dots, \nu_L\) where \(\nu_0 = \nu\) and \(\abs{\supp(\nu_l)} < \abs{\supp(\nu_{l+1})}\), meaning the measures get simpler with increasing index \(l\). We then iteratively compute a weight vector \(w^l\) adapted to \((\mu, \nu_l)\) and use \(w^l\) as a starting point for computing \(w^{l-1}\).

\begin{definition}
Let \((\nu_l)_{l\geq 0}\) be a sequence of discrete probability measures with \(\nu_0 = \nu\).
If for every \(l\) there exists a transport map \(\tau_l\) from \(\nu_l\) to \(\nu_{l+1}\), that is,
\[\nu_{l+1} = (\tau_l)_\# \nu_l,\]
we call \((\nu_l)_{l\geq 0}\) a \emph{decomposition} of \(\nu\).
\end{definition}

If we write the measures of such a decomposition as
\[\nu_l = \sum_{p\in S_l} \lambda_p^l \delta_p,\]
the definition implies that the cardinality of the supports of our measures decreases, \(\abs{S_{l+1}} \leq \abs{S_l}\).\\
Note that this is equivalent to the following: We can arrange the masses \(\lambda_p^l\) of \(\nu_l\) into clusters and assign each of these bijectively to a point \(q\) in \(S_{l+1}\) such that the sum off all masses in a cluster equals the mass \(\lambda_q^{l+1}\) of the assigned point in the measure \(\nu_{l+1}\).

Under additional assumptions on the source measure \(\mu\) and the decomposition \((\nu_l)_{l \geq 0}\) the sequence of weight vectors obtained from \((\nu_l)_{l \geq 0}\) as described above indeed converges towards the desired weight vector. In \cite{Mer11} the following theorem is proven:
\begin{theorem}
\label{thm:decomposition}
Let \(\nu\) and \((\nu_n)_{n\geq 1}\) be discrete probability measures supported on finite sets \(S\) and \((S_n)_{n\geq 1}\) respectively such that \(\lim_{n\to\infty} \Wass(\nu, \nu_n) = 0\). Suppose that
\begin{theoremenum}
	\item the support of \(\rho\) is the closure of a connected open set \(\Omega\) with piecewise \(\mathcal{C}^1\) boundary;
	\item there exists a positive constant \(\delta\) such that \(\rho \geq \delta\) on \(\Omega\);
	\item the support of all the measures \(\nu_l\) is contained in a fixed ball \(B(0, M)\).
\end{theoremenum}
Let additionally
\begin{align*}
w &\text{ be adapted to } (\mu, \nu),\\
w^n &\text{ be adapted to } (\mu, \nu_n)
\end{align*}
such that all of those weight vectors v fulfill
\[\int_{\R^d} \frac{1}{2} \left(\norm{x}^2 - \min_{p\in S} \left(\norm{x-p}^2 - v_p\right)\right) \mu(\diff x) = 0.\]
(Note that for every weight vector v there exists exactly one constant \(u\in\R^n\) such that \(u+v\) fulfills the above equation. Thus, the purpose of this additional condition is to make the adapted weight vectors unique since usually they are only unique up to the addition of a constant.)\\
Then, for every sequence of points \(p^n\in S_n\) with \(\lim_n p^n = p \in S\) one has
\[w_p=\lim_n w_{p^n}^n.\]
\end{theorem}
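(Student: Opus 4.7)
The strategy is a compactness argument leveraging uniqueness: the normalization condition pins down a unique adapted weight vector \(w\) for \((\mu,\nu)\) (and likewise \(w^n\) for each \((\mu,\nu_n)\)), so it suffices to show that every subsequential limit of the normalized \((w^n)\) along matched index correspondences equals \(w\).

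First, I would establish uniform boundedness of \(\norm{w^n}_\infty\). The normalization fixes the additive invariance of \(\Phi\), leaving only coordinate differences to control. Suppose for contradiction that \(w^n_{p^n} - w^n_{q^n} \to +\infty\) for some \(p^n \to p\) and \(q^n \to q\); since \(S_n \subset B(0,M)\), the weighted cell \(\Vor_{S_n}^{w^n}(p^n)\) would eventually swallow a neighborhood of \(q^n\) of unbounded diameter within \(\Omega\). The hypothesis \(\rho \geq \delta\) on the connected open set \(\Omega\) then forces \(\mu(\Vor_{S_n}^{w^n}(p^n))\) to exceed \(\lambda_{p^n}^n\) for large \(n\), contradicting adaptedness. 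Hence \(\norm{w^n}_\infty\) is bounded uniformly.

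Second, by Bolzano-Weierstrass applied coordinate-wise, combined with a diagonal argument over a countable set of sequences \(p^n \to p\) realizing each \(p \in S\), I pass to a subsequence along which \(w^n_{p^n} \to w^*_p\) for every such converging choice. The limit \(w^*\) must be adapted to \((\mu,\nu)\): extending Lemma~\ref{lemma:continuity} to joint variation of weight and supporting set (valid because the matched parts of \(S_n\) converge to \(S\) in Hausdorff distance, a consequence of \(\Wass(\nu,\nu_n) \to 0\) together with \(S_n \subset B(0,M)\)), the cells \(\Vor_{S_n}^{w^n}(p^n)\) converge in symmetric difference to \(\Vor_S^{w^*}(p)\), so their \(\mu\)-measures converge; weak convergence of \(\nu_n\) supplies \(\lambda_{p^n}^n \to \lambda_p\), yielding \(\mu(\Vor_S^{w^*}(p)) = \lambda_p\). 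The normalization identity passes to the limit by dominated convergence, using the boundedness of weights and supports together with \(\int \norm{x} \mu(\diff x) < \infty\). Under the stated geometric hypotheses the normalized adapted weight vector for \((\mu,\nu)\) is unique (the result of Geiß et al.\ cited after Theorem~\ref{thm:uniqueness}), so \(w^* = w\); since every subsequence has a further subsequence converging to \(w\), the whole sequence \((w^n_{p^n})\) converges to \(w_p\).

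The main obstacle I expect is the boundedness step: turning the heuristic ``a ballooning weight difference inflates a Voronoi cell past \(\lambda_{p^n}^n\)'' into a rigorous quantitative bound crucially uses the connectedness of \(\Omega\) together with \(\rho \geq \delta\) (otherwise a cell could grow across a \(\mu\)-null corridor at no mass cost), and the argument has to be robust against the \(S_n\) varying with \(n\). The identification step is technically delicate because both the weights and the underlying point sets vary simultaneously, but no new idea beyond Lemma~\ref{lemma:continuity} is required once an optimal coupling between \(\nu_n\) and \(\nu\) supplies the matching between the supports.
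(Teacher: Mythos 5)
The paper does not actually prove this theorem: it is quoted from \cite{Mer11} (``In \cite{Mer11} the following theorem is proven''), so there is no in-paper argument to compare yours against. Mérigot's own proof runs through Kantorovich duality --- the adapted weights are identified with discrete Kantorovich potentials, and convergence follows from the stability of potentials under weak convergence of the marginals, with the connectedness of \(\Omega\) and \(\rho\ge\delta\) used to make the potential unique up to an additive constant. Your route --- uniform boundedness of the normalized weights, Bolzano--Weierstrass, identification of every subsequential limit with the unique normalized adapted vector --- is a genuinely different, more hands-on compactness argument. Note that your boundedness step is actually easier than you fear: if \(w^n_{p}-w^n_{q}>\norm{p-q}\), then the cell of \(q\) is empty (its ball is swallowed by that of \(p\)), so adaptedness together with \(\lambda^n_q>0\) forces \(\abs{w^n_p-w^n_q}\le\norm{p-q}\le 2M\) for every pair of support points, with no appeal to \(\rho\ge\delta\) or connectedness; the normalization condition then pins down the common additive constant.

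The genuine gap is in your identification step. Several distinct points of \(S_n\) may converge to the same \(p\in S\) (nothing in the hypotheses prevents this --- it is in fact the typical situation for a multiscale decomposition), and then \(\Vor_{S_n}^{w^n}(p^n)\) does \emph{not} converge to \(\Vor_S^{w^*}(p)\); only the union of the cells over the whole cluster does, and only after you have shown that the subsequential limits of the weights of all points in the cluster coincide. The identity you should be passing to the limit is \(\mu(\Vor_S^{w^*}(p))=\lim_n\sum_{q^n\to p}\lambda^n_{q^n}=\lambda_p\), not the single-cell identity you wrote, and the claimed convergence of cells in symmetric difference requires a genuine extension of Lemma~\ref{lemma:continuity}, which as stated fixes the site set \(S\) and only perturbs \(w\); handling a varying number of sites is exactly where the work lies. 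The structural hypotheses earn their keep only at the very end, through the uniqueness of the normalized adapted vector (Geiß et al.), without which the subsequence principle collapses. Finally, the normalization condition in the statement is written with \(\norm{x-p}^2-v_p\), i.e., for the power diagrams of Mérigot's quadratic setting, while your argument (and the rest of the thesis) uses the additively weighted cells built from \(\norm{x-p}-v_p\); you should either adapt the normalization to the \(p=1\) cells or flag the mismatch explicitly.
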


In \hyperref[ssec:Impl_Decomposition]{Section \ref*{ssec:Impl_Decomposition}} we will give the concrete decomposition that we use in our implementation.

\section{\texorpdfstring{Algorithm with Decomposition of \(\nu\)}{Algorithm with Decomposition of v}}
\label{ssec:Algorithm_Decomposition}

Let us assume we have fixed a decomposition \((\nu_l)_{0\leq l\leq L}\) and associated transport maps \((\tau_l)_{0\leq l\leq L-1}\). For convenience we set \(\tau_L(p)\defeq p\) for every \(p\in S_L\). We further define \(\Phi^l\) as our usual \(\Phi\) but with respect to \((\mu, \nu_l)\) instead of \((\mu, \nu)\).

\begin{algorithm}[H]
\caption{Minimizing \(\Phi\) Using a Decomposition of \(\nu\)}
\label{algo:with_decomposition}
\begin{algorithmic}[1]
\State \(w^{L+1,0}\defeq (0, \dots, 0)\)
\For{\(l\defeq L\) to \(0\)}
	\State \(k\defeq 0\)
	\State \(w_p^{l,0} \defeq w_{\tau_l(p)}^{l+1}\) for every \(p\in S_l\)
	\While{\(\nor{\nabla\Phi^l(w^{l,k})}_1 > \varepsilon\)}
		\State \(k\defeq k+1\)
		\State apply one convex optimization step on \(\Phi^l\) starting at \(w^{l,k-1}\)
		\State assign the obtained vector to \(w^{l,k}\)
	\EndWhile
	\State \(w^{l}\defeq w^{l,k}\)
\EndFor
\end{algorithmic}
\end{algorithm}

The thinking behind the way of computing \(w^{l,0}\) from \(w^{l+1}\) is the following: Assume that \(\tau_l\) is the optimal transport map from \(\nu_{l+1}\) to \(\nu_l\) --- which we actually try to come close to in our implementation. Then we assign to each point \(p\in S_l\) the weight of its nearest neighbor \(q\in S_{l+1}\), refining the Voronoi diagram by splitting the Voronoi cell of \(q\) evenly into cells for the surrounding points in \(S_l\).

\chapter{Implementation}
\label{sec:Implementation}

We implemented the algorithm from \hyperref[ssec:Algorithm_Decomposition]{Section \ref*{ssec:Algorithm_Decomposition}} in C++ to prove the practicability of our theoretical results. In the next sections we will give details about this implementation and about the motivation for the choices we made for its different parts.

\section{Special Case: Comparison of Images}
\label{ssec:Comparison_Images}
The Wasserstein distance can be used as a measure for the similarity of two grayscale images by taking one as the source and the other as the target measure. The mass distributions are given by the distribution of gray in the images and the Wasserstein distance tells us how great the effort of transforming the source into the target image by shifting the grey around is. Our implementation provides a way to compare two images using exactly this approach.

Given two \(M\times N\)-images with values \(c_{1,1}^1, \dots, c_{M,N}^1\) and \(c_{1,1}^2, \dots, c_{M,N}^2\) for the pixels \([0,1)\times [0,1), \dots, [M-1,M)\times [N-1,N)\), the first step to get there is to convert them into a continuous and a discrete measure. Suppose the first image is our source. Then we set \(\rho\) to be constant on each pixel \([i,i+1)\times [j,j+1)\):
\[\rho(x) \defeq c_{i,j}^1 \quad\text{for all } x\in [i-1,i)\times [j-1,j).\]
The discrete target measure is obtained by choosing its support as the centers of the pixels and setting its masses to the corresponding values.\\
To get probability measures, we normalize the values to sum up to \(1\) and in order to ensure comparability with differently sized images, we divide the support of the measures by \(\max(M,N)\) which results in it being contained on the unit square \([0,1]\times [0,1]\).

\section{Convex Optimization}
The core of our algorithm is the minimization of \(\Phi\). Since \(\Phi\) is convex and we know its gradient, we have a multitude of numerical methods at our disposal for this. They mainly follow the same three steps to create a sequence \((w^k)_k\) with \(\lim_k\Phi(w^k) = \inf_{v\in\R^n} \Phi(v)\):
\begin{enumerate}
	\item determine a search direction \(\Delta w^k\);
	\item determine a step size \(t_k\);
	\item set \(w^{k+1} \defeq w^k + t_k \Delta w^k\).
\end{enumerate}
We consider the class of descent algorithms, that is, algorithms where \(\Phi(w^{k+1}) < \Phi(w^k)\) unless we have reached a minimum. This is achieved by choosing a descent direction \(\Delta w^k\) which means that \(\nabla\Phi(w^k) \Delta w^k < 0\). Then, by the Taylor approximation of \(\Phi\), we are guaranteed to find a step size \(t_k\) such that \(\Phi(w^{k+1}) < \Phi(w^k)\) if we just choose \(t_k\) small enough.\\
Common methods are the gradient method where \(\Delta w^k = -\nabla\Phi(w^k)\) and the Newton method with \(\Delta w^k = -\left(\nabla^2\Phi(w^k)\right)^{-1} \nabla\Phi(w^k)\). Since we only know the gradient but not the Hessian, we cannot use the latter one directly but need to rely on quasi-Newton methods where \(\nabla^2\Phi(w^k)\) is approximated using the gradient values of preceding steps. One of the most popular ones is the BFGS algorithm. We used the implementation libLBFGS \cite{libLBFGS} of the limited-memory version L-BFGS \cite{Noc80} which has a space requirement of \(\mathcal{O}(n)\) instead of \(\mathcal{O}(n^2)\) since it works without storing the whole approximate Hessian matrix. This is important because in the last step of \hyperref[algo:with_decomposition]{Algorithm~\ref*{algo:with_decomposition}}, where we minimize \(\Phi^0\), we have \(n=MN\) which equals the whole resolution of the images.\\
We found that the L-BFGS method provides a much faster rate of convergence than the gradient method.

The step size is determined using a backtracking line search algorithm. The optimal solution would be to minimize \(\Phi\) along the ray \({\Set{w^k + t \Delta w^k \given t\geq 0}}\):
\[t_k = \argmin_{t\geq 0} \Phi(w^k + t \Delta w^k)\]
and we try to approximate this value.
A backtracking line search starts with a relatively large step size and then decreases it until a certain criterion is fulfilled. The challenge is to neither choose \(t_k\) too small which causes a slow rate of convergence nor too large to prevent overshooting the optimal step and having to go in the opposite direction later. As our criterion we use the Wolfe conditions \cite{Wol69,Wol71} which ensure that both \(\Phi\) and \(\nabla\Phi\) are decreased sufficiently in the current step.

\section{Integration over the Voronoi Cells}
\label{ssec:Integration}
The computation of \(\Phi\) and its gradient which are needed for the convex optimization as well as the computation of the Wasserstein distance require us to integrate over Voronoi cells. The two types of integrals that appear are:
\begin{align*}
\int_{\Vor_S^w(i)} w_i \mu(\diff x) &= w_i \int_{\Vor_S^w(i)} \rho(x) \diff x\\
\intertext{and}
\int_{\Vor_S^w(i)} \lVert x - s^i\rVert \mu(\diff x) &= \int_{\Vor_S^w(i)} \lVert x - s^i\rVert \rho(x) \diff x
\end{align*}

A general way of integrating a function over a Voronoi cell \(\Vor_S^w(i)\) with a border consisting of \(k\) different hyperbola segments is cutting the cell into \(k\) parts by lines from \(s^i\) to the end points of those segments and integrating over the parts separately. This procedure is composed of several steps where the first one is to determine the affine transformation that moves the hyperbola segment onto the hyperbola \(y = 1/x\). Then one applies the reverse transformation on the function and integrates this transformed function over the area between \((0,0)\) and the two end points of the transformed hyperbola segment. How obtaining this transformation can be achieved is explained in detail in \hyperref[ssec:Drawing]{Section~\ref*{ssec:Drawing}}.

However, to make things less complicated, we used a different approach in our implementation. Suppose the support of \(\rho\) is bounded. Then we can put a rectangle around \(\supp(\rho)\) which we subsequently partition into equi-sized squares \([a_l,a_l+c)\times[b_r,b_r+c),\ l=1, \dots, L;\ r = 1, \dots, R\). We integrate \(\rho\) over those squares to get a new density \(\tilde{\rho}\) which replaces \(\rho\) and which is constant on each square:
\[\tilde{\rho} (x) \defeq \int_{[a_l,a_l+c)\times[b_r,b_r+c)} \rho(z) \diff z \quad\text{for all } x\in [a_l,a_l+c)\times[b_r,b_r+c).\]
Note that in the case where \(\rho\) is given by an image as in our implementation and we set the squares to the original pixels or refinements of them, \(\rho\) and \(\tilde{\rho}\) coincide.\\
We proceed to determine the Voronoi cell that the center of a square lies in and assume that each Voronoi cell is a polygon made up of the squares assigned to it this way. This allows us to easily integrate over a cell since it amounts to integrating over a union of squares. The price we pay is a reduced accuracy but we are able to control the error by the choice of the number of squares. This is especially useful when applying the multiscale approach:  The Voronoi diagram constructed from \(\nu_{l+1}\) contains fewer and therefore bigger cells than the diagram constructed from \(\nu_l\). Thus, we can use a rougher resolution for computing the weight vector adapted to \((\mu, \nu_{l+1})\) than for the one adapted to \((\mu, \nu_l)\) without introducing too much error.

\section{\texorpdfstring{Decomposition of \(\nu\), Lloyd's Algorithm}{Decomposition of v, Lloyd's Algorithm}}
\label{ssec:Impl_Decomposition}
\begin{algorithm}[t]
\caption{Computation of \(\nu_{l+1}\) from \(\nu_l\) Using Lloyd's Algorithm}
\begin{algorithmic}[1]
\State \(N \defeq \abs{S_l} / 5\)
\State \(S_{l+1} \defeq \emptyset\)
\State draw \(q^1, \dots, q^N\) randomly and uniquely from \(S_l\)
\While{\(S_{l+1} \neq \{q^1, \dots, q^N\}\)}
	\State \(S_{l+1} \defeq \{q^1, \dots, q^N\}\)
	\State \(C_1\defeq\emptyset, \dots, C_N\defeq\emptyset\)
	\For{\(p\in S_l\)}
		\State \(i \defeq \argmin_k \norm{p-q^k}\)
		\State add \(p\) to \(C_i\)
	\EndFor
	\For{\(i=1\) to \(N\)}
		\State \(q^i \defeq (0, \dots, 0)\)
		\State \(a \defeq 0\) \Comment{accumulated mass for current cluster}
		\For{every \(p\in C_i\)}
			\State \(q^i \defeq q^i + \lambda_p^l p\)
			\State \(a \defeq a + \lambda_p^l\)
		\EndFor
		\State \(q^i \defeq \frac{1}{a} q^i\)
		\State \(\lambda_i^{l+1} \defeq a\)
	\EndFor
\EndWhile
\end{algorithmic}
\end{algorithm}
In view of \hyperref[thm:decomposition]{Theorem \ref*{thm:decomposition}}, we should choose a decomposition \((\nu_l)_{0\leq l\leq L}\) such that \(\Wass(\nu_l, \nu_{l+1})\) is as small as possible. The task of minimizing the Wasserstein distance between two consecutive measures \(\nu_l, \nu_{l+1}\) amounts to finding an optimal clustering of \(\supp(\nu_l)\) into a given number \(N\defeq \abs{S_{l+1}} \leq \abs{S_l}\) of clusters \(C_1, \dots, C_N\) with assigned points \(q^1, \dots, q^N\) in the sense that we try to minimize
\[D\defeq\sum_{p\in S_l} \lambda_p^l \norm{p - q^p},\]
where by \(q^p\) we denote the point assigned to the cluster to which \(p\) belongs. This is a weighted k-means problem. Since the NP-hard \cite{MNV09} standard k-means problem can be reduced to the weighted one by setting the weights to \(1\), it is NP-hard, too. Therefore, we restrict ourselves to finding a local optimum by applying Lloyd's algorithm. For the number of clusters we choose \(N = \abs{S_{l+1}} \defeq \lfloor\abs{S_l} / 5\rfloor\).\\
We initialize \(S_{l+1}\) with a random sample \(q^1, \dots, q^N\) of \(N\) points from \(S_l\). Then we iterate two steps:
\begin{enumerate}
	\item Minimize \(D\) by assigning each point in \(S_l\) to the nearest cluster center.
	\item Compute the new center \(q^i\) of each cluster by taking the weighted average of the points in it. Compute its weight \(\lambda_i^{l+1}\) by summing up the weights of those points.
\end{enumerate}
The algorithm terminates at a local optimum since at each iteration \(D\) decreases and there is only a finite number of ways to split \(S_l\) between \(N\) clusters.

\section{Construction of the Voronoi Diagram}
The evaluation of \(\Phi\) and its gradient at a given weight vector requires the computation of an additively weighted Voronoi diagram. For this we use the package ``2D Apollonius Graphs'' which is part of the CGAL library \cite{cga}. It does not compute an additively weighted Voronoi diagram directly but its dual graph. If we imagine the Voronoi diagram to be a graph where the hyperbola segments are the edges and their end points are the vertices, then its dual graph is the graph where the set of vertices is given by the cells and two vertices are connected if and only if the associated cells are neighbors.
\begin{figure}[H]
  \begin{subfigure}[b]{0.4\textwidth}
    \includegraphics[width=\textwidth]{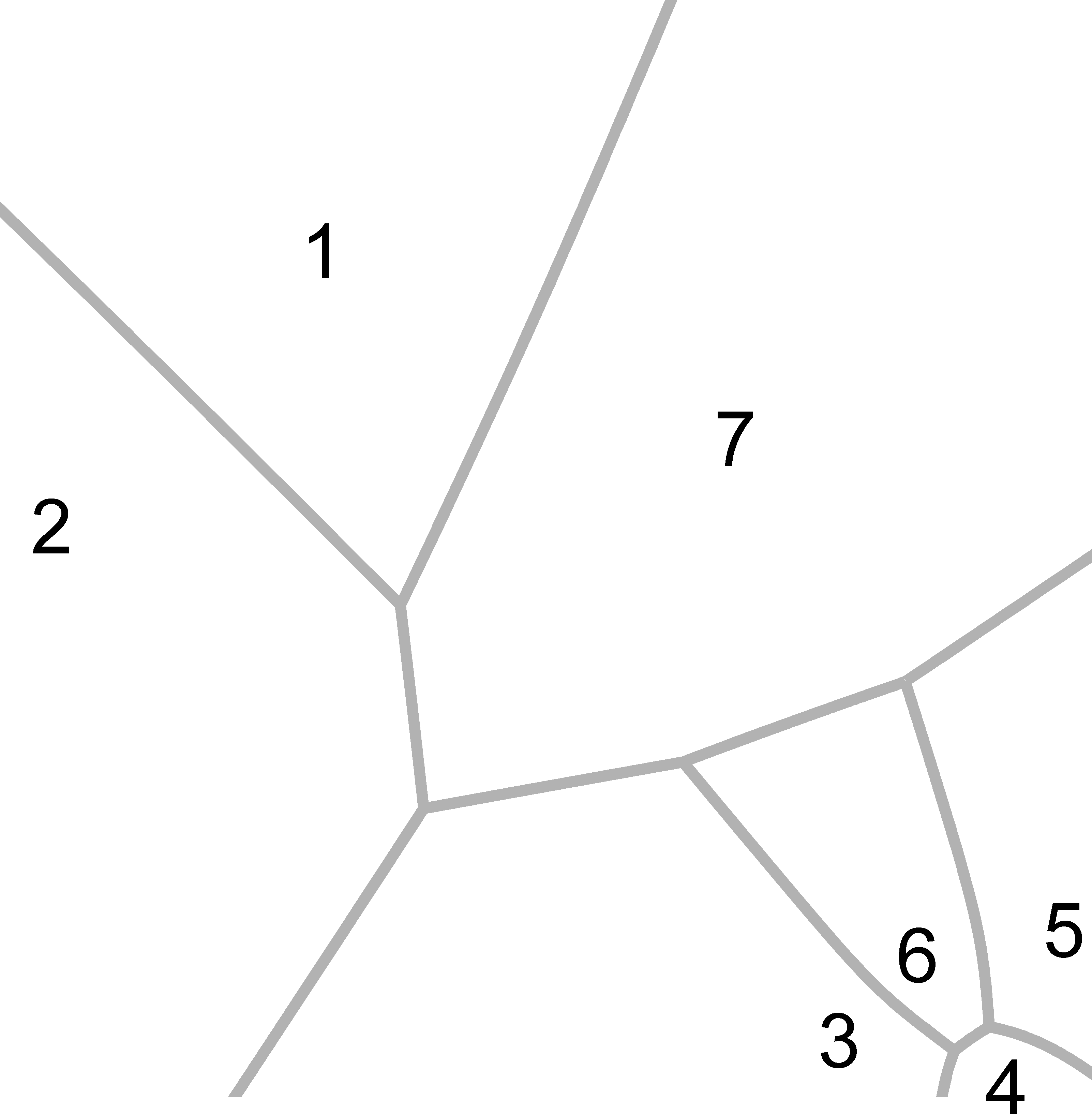}
  \end{subfigure}
  \hfill
	\begin{tikzpicture}[-,auto,node distance=2.7cm,
											thick,main node/.style={font=\sffamily\bfseries}]

		\node[main node] (1) {1};
		\node[main node] (2) [below left of=1,xshift=0.5cm] {2};
		\node[main node] (7) [below right of=1,yshift=0.32cm] {7};
		\node[main node] (3) [below right of=2,xshift=1.9cm,yshift=-0.3cm] {3};
		\node[main node] (6) [below right of=7,xshift=-0.8cm] {6};
		\node[main node] (4) [below of=6,xshift=0.2cm,yshift=1.9cm] {4};
		\node[main node] (5) [right of=6,xshift=-1.9cm,yshift=0.1cm] {5};

		\path[every node/.style={font=\sffamily\small}, line width = 1.2pt]
			(1) edge[color=black!30!white] (2)
					edge[color=black!30!white] (7)
			(2) edge[color=black!30!white] (7)
					edge[color=black!30!white] (3)
			(3) edge[color=black!30!white] (7)
					edge[color=black!30!white] (6)
					edge[color=black!30!white] (4)
			(4) edge[color=black!30!white] (6)
					edge[color=black!30!white] (5)
			(5) edge[color=black!30!white] (6)
					edge[color=black!30!white] (7)
			(6) edge[color=black!30!white] (7);
	\end{tikzpicture}
  \caption{A Voronoi diagram and its dual graph.}
\end{figure}

The algorithm is based on \cite{KY02}. Sites are inserted incrementally in three steps:
\begin{enumerate}
	\item Find the site whose circle is closest to the one of the new site. \label{enum:algo_voronoi1}
	\item Decide if the site is trivial, that is, its circle is completely contained in another one and therefore its cell is empty. \label{enum:algo_voronoi2}
	\item Determine the region that gets altered by inserting the site and change it accordingly. \label{enum:algo_voronoi3}
\end{enumerate}

We now describe the steps in detail. In our description, by the distance between two sites we always mean the distance of their circles.\\
Assume that we want to insert a new site \(a\) into the graph.

\begin{enumerate}[wide, labelwidth=!, labelindent=0pt]
	\item We start at a random site \(b\) that is already part of the graph. We consider all neighbors of \(b\). If amongst them we find a \(b'\) which is closer to \(a\) than \(b\), then \(b\) cannot be the nearest neighbor and we continue our search at \(b'\). Otherwise \(b\) is the nearest neighbor. This algorithm has a runtime of \(\mathcal{O}(h)\) where by \(h\) we denote the number of non-trivial sites in the graph.\\
	In the CGAL library it is improved to roughly \(\mathcal{O}(\log h)\) by maintaining a hierarchy of graphs. The lowest level is the original dual graph and each level above contains a small random sample of vertices of the level below, making a search in the graphs at higher levels faster than that in the graphs at lower levels. A search is then performed by applying the algorithm above to the graph at the highest level, continuing at the next lower level where we start at the nearest neighbor from the level above, and descending until we reach the original graph. This approach is quite similar to the decomposition of \(\nu\) whose aim it is to find better starting points, too.\\
	The search for the nearest neighbor is of special interest for us not only in the context of creating the additively weighted Voronoi diagram, but also when determining the Voronoi cell that contains the center of one of the squares that were introduced in \hyperref[ssec:Integration]{Section \ref*{ssec:Integration}}: This cell is the nearest neighbor of the square's center. When using the nearest neighbor search for this purpose, it can be sped up even more. Rather than beginning at a random cell, we start at the cell one of the neighboring squares' center --- whose location we have already determined --- lies in. Often this is already the desired cell or we only have to move few steps to find it since in practice the number of squares exceeds the number of cells by a huge factor and thus their size is much smaller.
	\item After the execution of the first step we know the nearest neighbor of \(a\). Thus, the only thing that is left to do in order to determine whether \(a\) is trivial is to check whether its circle is completely contained in the one of its nearest neighbor. If that is not the case, we continue with \hyperref[enum:algo_voronoi3]{step \ref*{enum:algo_voronoi3}}. Otherwise we can omit \hyperref[enum:algo_voronoi3]{step \ref*{enum:algo_voronoi3}} since inserting \(a\) does not change the Voronoi diagram.
	\item We determine the hyperbola segments that get altered by inserting \(a\) by wandering on them using a depth first search. This is possible since the part of the border of the Voronoi cells being altered is always connected. We start at a segment of the border of the nearest neighbor of \(a\) that gets altered. Next, we check its end points. If an end point gets altered, then we continue on the hyperbola segments adjacent to it, again checking their end points. Once we reach an end point that stays unchanged, we don't need to follow this branch any more and can continue at the next one.\\
	At the end of this procedure we know the hyperbola segments that are affected by inserting \(a\) and can adjust our graph.
\end{enumerate}

The expected time complexity for creating the dual graph of an additively weighted Voronoi diagram of \(n\) sites amongst which \(h\) are non-trivial using the approach above is roughly \(\mathcal{O}(n\log h)\).

\section{Drawing a Voronoi Diagram}
\label{ssec:Drawing}
Drawing the two-dimensional Voronoi diagram for a set of sites amounts to drawing the hyperbola segments each cell's boundary consists of. For a cell to be drawn, two pieces of information are required: the neighboring cells and for each neighbor the end points of the border between this and the original cell. We again use the ``2D Apollonius Graphs'' package from the CGAL library \cite{cga} which can provide both.

\begin{figure}
	\begin{center}
	\includegraphics[width=0.6\textwidth]{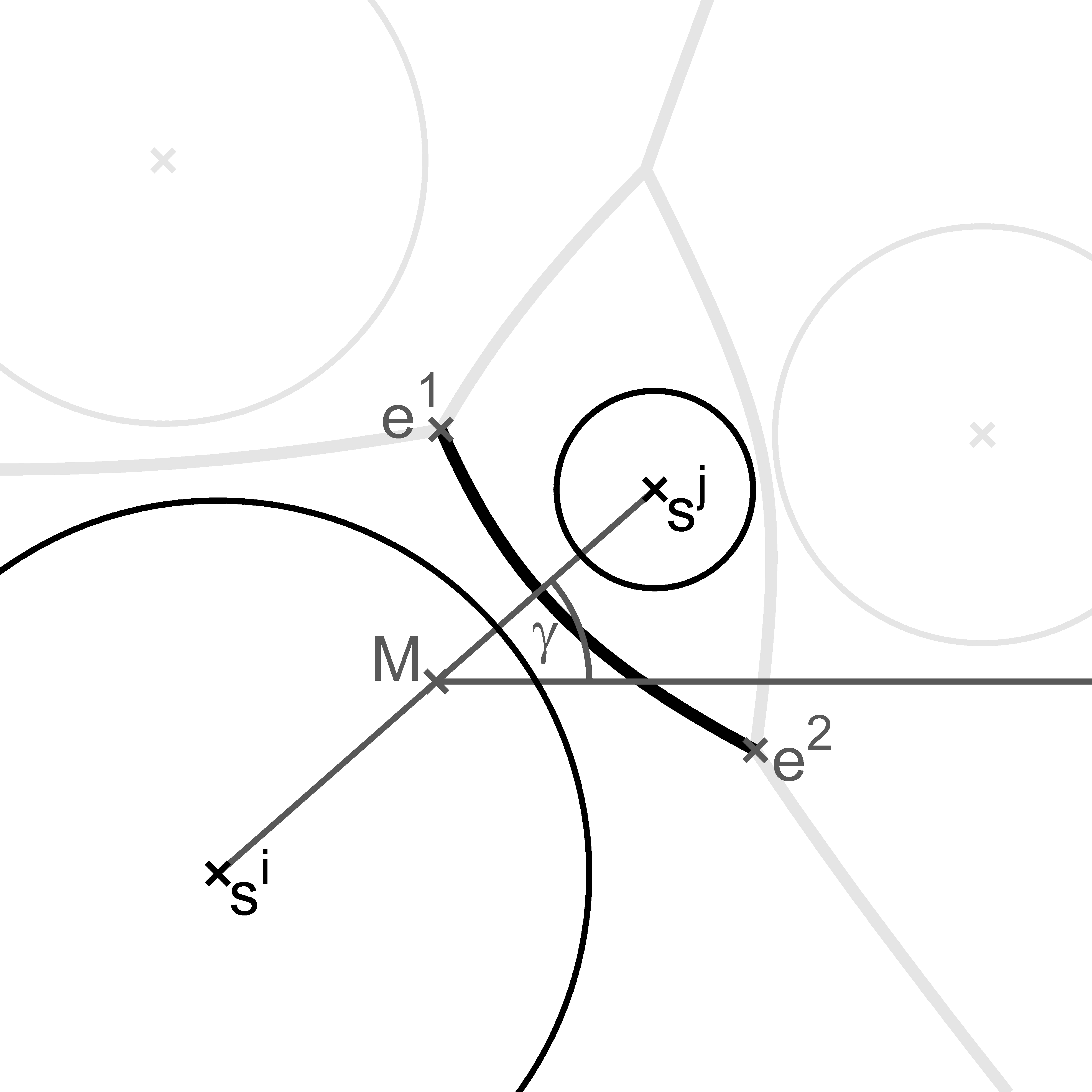}
	\caption{The segment of a hyperbola that we want to draw.}
	\end{center}
\end{figure}

Suppose now we are given a hyperbola segment between sites \(s^i\) and \(s^j\) with end points \(e^1\) and \(e^2\). Our idea is to plot a segment of the function \(x\mapsto 1/x\), \(x>0\), and apply an affine transformation to it to generate the hyperbola segment. In the following we will construct the inverse of this transformation.

We assume that \(s_1^i \leq s_1^j\). Without loss of generality we may further assume that \(w_i > w_j\). If \(w_i < w_j\), we rotate the coordinate system by \(\pi\) and exchange \(i\) and \(j\). If \(w_i=w_j\), the hyperbola degenerates to a line which we can easily draw directly.\\
The branch \(B\) of the hyperbola between \(s^i\) and \(s^j\) is formed by the points \(p\) that fulfill the equation
\[\norm{s^i-p} - \norm{s^j-p} = w_i - w_j.\]
All of our transformations will be applied to \(B\); throughout the description of each one \(B\) won't denote the original branch but the object obtained after the execution of the previous transformations.

Let \(M\defeq \frac{1}{2} (s^i - s^j)\) denote the center between \(s^i\) and \(s^j\). Let further \(\gamma\) be the angle between \(\overrightarrow{s^i s^j}\) and the \(x\)-axis, more specifically, the angle \(\angle R M s^j\) where \(R\defeq (s_1^j, M_2)\).\\
First, we move \(M\) to the origin by subtracting it. We then rotate \(B\) by \(-\gamma\) to let the line between its foci coincide with the x-axis by using the rotation matrix
\[\begin{pmatrix}
\begin{array}{rr}
	\cos(-\gamma)& -\sin(-\gamma)\\
	\sin(-\gamma)& \cos(-\gamma)
\end{array}
\end{pmatrix}\\
=
\begin{pmatrix}
\begin{array}{rr}
	\cos(\gamma)& \sin(\gamma)\\
	-\sin(\gamma)& \cos(\gamma)
\end{array}
\end{pmatrix}.\]
Let \(a\defeq (w_i - w_j)/2\) and \(b\defeq \norm{s^i - s^j}/2\). The equation for \(B\) now becomes
\[\norm{p-(-b,0)} - \norm{p-(b,0)} = 2a.\]
By moving one norm to the other side and taking squares, we get
\[-a^2 + p_1 b = a \norm{p-(b,0)}.\]
Squaring again gives
\[a^4 + b^2 p_1^2 = a^2 p_1^2 + a^2 b^2 + a^2 p_2^2\]
which can further be simplified to
\[1 = \frac{p_1^2}{a^2} - \frac{p_2^2}{b^2 - a^2}.\]
That means that by stretching \(B\) with the matrix
\[\begin{pmatrix}
	\frac{1}{a}& 0\\
	0& \frac{1}{\sqrt{b^2 - a^2}}
\end{pmatrix},\]
the hyperbola branch becomes
\begin{equation}
\label{eq:hyperbola}
1 = p_1^2 - p_2^2 \quad\text{where } p_1 > 0.
\end{equation}
Since \(p_1 = \cosh(t)\), \(p_2 = \sinh(t)\) fulfills \eqref{eq:hyperbola} for every \(t\in\R\) and \(\cosh\) grows continuously from \(1\) to infinity, we can parametrize \(B\) by
\[B=\Set{(\cosh(t), \sinh(t)) \given t\in\R}.\]
The last step consists of rotating \(B\) by \(\pi/4\) and stretching it by \(\sqrt{2}\) which coincides with a multiplication with
\begin{equation}
\label{eq:last_trafo}
\sqrt{2}
\begin{pmatrix}
\begin{array}{rr}
	\cos(\pi/4)& -\sin(\pi/4)\\
	\sin(\pi/4)& \cos(\pi/4)
\end{array}
\end{pmatrix}
=
\begin{pmatrix}
\begin{array}{rr}
	1& -1\\
	1& 1
\end{array}
\end{pmatrix}.
\end{equation}
Now we've got
\[\begin{pmatrix}
\begin{array}{rr}
	1& -1\\
	1& 1
\end{array}
\end{pmatrix}
\begin{pmatrix}
	\cosh(t)\\
	\sinh(t)
\end{pmatrix}
=
\begin{pmatrix}
	\cosh(t) - \sinh(t)\\
	\cosh(t) + \sinh(t)
\end{pmatrix}\]
and
\begin{align*}
(\cosh(t)& - \sinh(t)) (\cosh(t) + \sinh(t))\\
&= \cosh(t)^2 - \sinh(t)^2\\
&= 1.
\end{align*}
The graph of our target function \(x\mapsto 1/x\), \(x>0\), is given by the equation
\[G \defeq \Set{p\in \R_+^2 \given p_1 p_2 = 1}\]
and therefore we have reached our goal to transform \(B\) into this graph.

For summarizing the transformations we utilized in one operation, we multiply the matrices where as a shorthand we write \(c\defeq \sqrt{b^2 - a^2}\):
\begin{align*}
&\begin{pmatrix}
\begin{array}{rr}
	1& -1\\
	1& 1
\end{array}
\end{pmatrix}
\begin{pmatrix}
	\frac{1}{a}& 0\\
	0& \frac{1}{c}
\end{pmatrix}
\begin{pmatrix}
\begin{array}{rr}
	\cos(\gamma)& \sin(\gamma)\\
	-\sin(\gamma)& \cos(\gamma)
\end{array}
\end{pmatrix}\\
&=
\begin{pmatrix}
\begin{array}{rr}
	\frac{1}{a}\cos(\gamma) + \frac{1}{c}\sin(\gamma)& \frac{1}{a}\sin(\gamma) - \frac{1}{c}\cos(\gamma)\\
	\frac{1}{a}\cos(\gamma) - \frac{1}{c}\sin(\gamma)& \frac{1}{a}\sin(\gamma) + \frac{1}{c}\cos(\gamma)
\end{array}
\end{pmatrix}\\
&\mathrel{=\vcentcolon} A^{-1}.
\end{align*}
So we can go from \(B\) to \(G\) by
\begin{equation}
\label{eq:hyperbola_trafo_backward}
G = A^{-1} (B - M)
\end{equation}
and from \(G\) to \(B\) by
\begin{equation}
\label{eq:hyperbola_trafo_forward}
B = A G + M
\end{equation}
where
\[A =
\frac{1}{2}
\begin{pmatrix}
\begin{array}{rr}
	a\cos(\gamma) + c\sin(\gamma)& a\cos(\gamma) - c\sin(\gamma)\\
	a\sin(\gamma) - c\cos(\gamma)& a\sin(\gamma) + c\cos(\gamma)
\end{array}
\end{pmatrix}\]
which can easily be seen by inverting the individual matrices.

Instead of the whole hyperbola branch \(B\) we just want the segment between \(e^1\) and \(e^2\). Therefore, we apply transformation \eqref{eq:hyperbola_trafo_backward} to both \(e^1\) and \(e^2\) to obtain \(\tilde{e}^1\) and \(\tilde{e}^2\) and plot \(x\mapsto 1/x\) only between those two points. We get
\[\tilde{G} \defeq \Set{p\in \R_+^2 \given p_1 p_2 = 1,\ \min(\tilde{e}_1^1, \tilde{e}_1^2) \leq p_1 \leq \max(\tilde{e}_1^1, \tilde{e}_1^2)}\]
and with transformation \eqref{eq:hyperbola_trafo_forward} the desired segment.

We could of course plot \((\cosh(t), \sinh(t))\) instead of \((1, 1/x)\), followed by the appropriate transformation. Then transformation \eqref{eq:last_trafo} could be omitted. Performance is the reason why we still rely on our method: Plotting \((1, 1/x)\) is faster than plotting \((\cosh(t), \sinh(t))\).

\bibliographystyle{alpha}
\bibliography{bachelor_thesis}

\end{document}